\newtheorem{theorem}{Theorem}[section]
\newtheorem{lemma}[theorem]{Lemma}
\newtheorem{proposition}[theorem]{Proposition}
\newtheorem{conjecture}[theorem]{Conjecture}
\newtheorem{question}[theorem]{Question}
\theoremstyle{remark} 
\newtheorem{example}[theorem]{Example}
\newtheorem{remark}[theorem]{Remark}
 \crefname{rem}{Remark}{Remarks}
 \Crefname{rem}{Remark}{Remarks}
\theoremstyle{definition} 
\newtheorem{definition}[theorem]{Definition} 
\titleformat*{\section}{\normalsize \bfseries \filcenter}
\titleformat*{\subsection}{\normalsize \bfseries }
\newtheorem{mainthm}{Theorem}
\Crefname{mainthm}{Theorem}{Theorems}
\Crefname{maincor}{Corollary}{Corollaries}
\Crefname{mainquestion}{Question}{Questions}
\Crefname{mainconj}{Conjecture}{Conjectures}
\def\namedlabel#1#2{\begingroup
   \def\@currentlabel{#2}
   \label{#1}\endgroup
}
\DeclareFontFamily{U}{mathx}{}
\DeclareFontShape{U}{mathx}{m}{n}{<-> mathx10}{}
\DeclareSymbolFont{mathx}{U}{mathx}{m}{n}
\DeclareMathAccent{\widehat}{0}{mathx}{"70}
\DeclareMathAccent{\widecheck}{0}{mathx}{"71}
\title{\normalsize \textbf{Rigidity and Realizability for Tropical Curves in Dimension 3}}
\author{\normalsize Jeff Hicks}
\date{}
\newcommand{\eps}{\varepsilon}
\newcommand{\RR}{\mathbb R}
\newcommand{\ZZ}{\mathbb Z}
\newcommand{\NN}{\mathbb N}
\newcommand{\CC}{\mathbb C}
\newcommand{\II}{\mathbb I}
\newcommand{\CP}{\mathbb{CP}}
\newcommand{\into}{\hookrightarrow}
\newcommand{\tensor}{\otimes}
\newcommand{\ot}{\leftarrow}
\newcommand{\CF}{CF^\bullet}
\DeclareMathOperator{\id}{id}
\DeclareMathOperator{\coker}{coker}
\DeclareMathOperator{\Ext}{Ext}
\DeclareMathOperator{\Abd}{Abd}
\newcommand{\syza}{\pi_A}
\newcommand{\syzb}{\pi_B}
\DeclareMathOperator{\Coh}{Coh}
\DeclareMathOperator{\st}{\mid}
\DeclareMathOperator{\tropb}{TropB}
\DeclareMathOperator{\ev}{ev}
\DeclareMathOperator{\Def}{Def}
\DeclareMathOperator{\Area}{Area}
\DeclareMathOperator{\Grd}{Grd}
\def\ii{\mathbf{i}}
\def\jj{\mathbf{j}}
\def\aa{\mathbf{a}}
\def\x{\mathbf{x}}
\def\y{\mathbf{y}}
\def\b{\mathbf{b}}
\def\p{p}
\def\uM{\underline{\mathcal M}}
\def\upi{\underline{\pi}}
\def\uev{\underline{\ev}} \addbibresource{preambles/references.bib}
\newcommand{\Addresses}{{\bigskip
  \footnotesize
  \noindent J.~Hicks, \textsc{School of Mathematics and Maxwell Institute for Mathematical Sciences, University of Edinburgh}\par\nopagebreak
  \noindent \textit{E-mail address}: \texttt{jeff.hicks@ed.ac.uk}
}}
\begin{document}
\maketitle
\begin{abstract}
  We present an unobstructedness criterion for Lagrangian threefolds $L\subset X^A$ using the $H_1(L)$-class associated with the boundary of a pseudoholomorphic disk.
  As an application, let $X^A\to Q$ be a Lagrangian torus fibration whose base $Q$ is a tropical abelian threefold. Given $V\subset Q$ a rigid tropical curve with a pair-of-pants decomposition, we prove that the Lagrangian lift $L_V\subset X^A$ is unobstructed. 
  Provided that an appropriate homological mirror symmetry statement holds, this implies the existence of a realization $Y_V$ in the mirror abelian threefold $X^B\to Q$.
\end{abstract}

\section{Introduction}
Deformation theory is a powerful proof technique in geometry. The general strategy is first to exhibit a solution to some problem in a special, potentially simpler, setting. Then one proves the solution can be ``deformed'' to the general case. Here are two instances where this strategy is employed:

\begin{description}
  \item[Tropical geometry:] Let $X^B\to Q$ be a tropicalization map. 
  Given a tropical subvariety $V\subset Q$, methods from deformation theory establish whether $V$ can be lifted to an algebraic subvariety  $Y_V\subset X^B$.
  \item[Symplectic geometry:] Given a Lagrangian submanifold $L$ of a symplectic manifold $X^A$, the Floer cochains $\CF(L)$ are deformation of the cochains $C^\bullet(L)$. They provide a powerful set of invariants of the pair $L\subset X^A.$
\end{description} 

Mirror symmetry is a conjectured dictionary from string theory \cite{candelas1991pair} between symplectic geometry on $X^A$ and algebraic geometry on a ``mirror space'' $X^B$. Geometrically, the spaces $X^A, X^B$ arise as dual torus fibrations over a common base $X^A\to Q\ot X^B$ \cite{strominger1996mirror}. Broadly speaking, this paper uses mirror symmetry to solve deformation theory problems when the Lagrangian $L_V\subset X^A$ is related to a tropical subvariety $V\subset Q$.

\subsection{Background: deformation problems in tropical and symplectic geometry}

Let $Q$ be an integral metric affine manifold and denote by $N_Q \subset TQ$ its integrable full-rank lattice. The space $X^B_\CC:= TQ/N_Q$ is naturally a complex manifold equipped with a holomorphic volume form that makes the torus fibers of the map $\syzb: X^B_\CC \to Q$ totally real submanifolds.

Under suitable conditions, the $\syzb$-image of a complex subvariety $Y_\CC \subset X^B_\CC$ can be approximated by a piecewise linear (tropical) subset of $Q$. To precisely establish this correspondence between subvarieties of $X^B_\CC$ and tropical geometry on $Q$, we replace the complex space with a rigid analytic space $X^B$ and a tropicalization map $\tropb: X^B \to Q$. 
The image of a subvariety $Y \subset X^B$ under the tropicalization map is a tropical subvariety in $Q$ \cite{groves1984geometry}. A natural question is the extent to which the tropical geometry of $Q$ captures the geometry of $X^B$. We say that a tropical subvariety $V \subset Q$ is \emph{$B$-realizable} if there exists an analytic subvariety $Y_V \subset X^B$ such that $\tropb(Y_V) = V$.

When $V \subset \RR^n$ is a $k$-dimensional tropical pair-of-pants, there is no obstruction to building a lift of $V$. However, even if $V \subset Q$ is locally modeled on tropical pairs of pants, there may be obstructions to globally gluing together the lifts of local pieces. The first example where this occurs is the superabundant tropical elliptic curve, which is a 3-valent smooth tropical curve in $\RR^3$ \cite{mikhalkin2005enumerative}. Understanding how combinatorial properties of $V$ translate into $B$-realizability has been extensively studied. It is already a rich problem for 3-valent tropical curves in $Q = \RR^n$, where it has been proven that simply-connectedness \cite{nishinou2006toric},  well-spacedness conditions \cite{speyer2014parameterizing,katz2012lifting,ranganathan2017skeletons,lam2024combinatoricshurwitzdegenerationstropical}, and non-superabundance \cite{cheung2016faithful} are realizability criteria for smooth trivalent tropical curves. However, there are many curves whose realizability status is not yet known (see, for example \cite{koyama2024constructions}) and nearly all research on realizability has been restricted to the setting of tropical curves or hypersurfaces of $\RR^n$.

\subsubsection*{Deformation Theory on the \texorpdfstring{$B$}{B}-side}

Given $V \subset Q$ a tropical curve, \cite{nishinou2006toric} lays out the following strategy for producing a $B$-realization. First, construct a toric degeneration $X^B_t$ dual to the tropical curve $V$. From this data, one can build a nodal curve $Y_{V, 0} \subset X^B_0$ whose restriction to each toric component is a genus zero curve, and whose intersection complex is the tropical curve $V$. Realizing the tropical curve $V$ is equivalent to finding a simultaneous deformation of the pair $Y_{V, t} \subset X^B_t$.
Logarithmic geometry provides a homology theory with obstruction classes suitable for this deformation problem, and \cite{nishinou2006toric,cheung2016faithful} obtain their realizability criteria by proving these classes vanish.

An alternate sheaf-theoretic approach to this deformation problem can also be stated: we promote $Y_{V_0}$ to the sheaf pushforward of the structure sheaf, $\mathcal O_{V,0} \in \Coh(X^B_0)$. The obstructions to deformation of the pair $\mathcal O_{V, 0}\in \Coh^{dg}(X^B_0)$ live in $\Ext^2_{X^B_0}(i_*\mathcal O_{V,0}, i_*\mathcal O_{V,0})$ \cite[Theorem 7.3]{hartshorne2010deformation}, so we might hope to leverage the vanishing of these groups to obtain $B$-realizability results. Let us now look at two examples that we will revisit throughout the introduction:

\begin{description}
  \item[B.1] Let $V \subset Q = \RR^n$ be a point---this should be one of the ``easiest'' examples for $B$-realizability. However, $\Ext^\bullet_{X^B_0}(\mathcal O_{V, 0}, \mathcal O_{V, 0})$ is the exterior algebra and has many potential obstructions in second cohomology.
  \item[B.2] Let $V\subset Q = \RR^n$ be a smooth tropical hypersurface. Then $\mathcal O_{V,0}$ admits a length 1 resolution by line bundles on $X^B_0$, from which we immediately obtain $\Ext^2_{X^B_0}(i_*\mathcal O_{V,0}, i_* \mathcal O_{V,0}) = 0$. This indicates that there are no obstructions, reflecting that all tropical hypersurfaces are $B$-realizable.
\end{description}

\subsubsection*{Deformation Theory on the \texorpdfstring{$A$}{A}-side}
We now describe a similar deformation theory problem that occurs in symplectic geometry.
Let $X$ be a compact symplectic manifold. Given a graded spin Lagrangian submanifold $L\subset X$, the Fukaya $A_\infty$ algebra $(\CF(L), m^k)$ is a deformation of the differential graded algebra of cochains on $L$. 
We will be using the setup from \cite{fukaya2010cyclic} where the underlying cochains are de Rham cochains. 
To first approximation, operations $m^k: \CF(L)^{\otimes k}\to \CF(L)[2-k]$ are defined by counting pseudoholomorphic disks with boundary on $L$. This deformation is frequently unwieldy: for instance, the filtered $A_\infty$ relations do not require that the differential relation $m^1\circ m^1=0$ holds. As a consequence, $\CF(L)$ does not usually have well-defined homology groups. The failure of the differential relation is captured by the curvature term $m^0: \Lambda_0 \to CF^2(L)$ which satisfies the weaker condition $m^1\circ m^1 = m^2(m^0 \otimes \id) \pm m^2(\id\otimes m^0)$. The lowest-order term of the right-hand side is closed and this component gives a class in $H^2(L)$ obstructing the existence of homology groups.

Several hypotheses may be used to obtain a well-defined homology theory associated with the filtered $A_\infty$ algebra $\CF(L)$. The strongest asks for $m^0=0$; then we say that $\CF(L)$ is tautologically unobstructed. A weaker condition, introduced by \cite{fukaya2010lagrangian}, is to ask that there exists a bounding cochain $b\in CF^1(L)$ so that the $b$-deformed $A_\infty$ structure has vanishing curvature term $m^0_b=0$ (see the discussion preceding \cref{eq:maurerCartan}). In this case, we say that $L$ is unobstructed (by the bounding cochain $b$) with well defined Floer complex $\CF(L, b)$. The input data for many constructions in symplectic geometry are well-defined Floer cohomology groups. The most relevant example where Floer cohomology groups play a role for this paper is the homological mirror symmetry conjecture of \citeauthor{kontsevich1994homological}. 

We now discuss two topological criteria on the pair $(X, L)$ which imply unobstructedness of $\CF(L)$. The products can be refined to $m^k= \sum_{\beta\in H_2(X, L)} m^k_\beta$ where the $\beta$-index records the relative homology class of the pseudoholomorphic disks contributing to the $A_\infty$-products.
\begin{description}
  \item[A.1] \emph{Bounds no pseudoholomorphic disks:} Suppose that $\omega(\beta)=0$ for all $\beta\in H_2(X, L)$. This occurs, for example, when our Lagrangian submanifold is exact or if $H_2(X, L)$ vanishes. As every $J$-pseudoholomorphic disk must have a positive symplectic area for $\omega$-compatible $J$, we conclude that  $m^k_\beta =0$ for all $\beta\neq 0$. Then $(\CF(L), m^k)=(C^\bullet(L), d, \wedge)$, and our Lagrangian is \emph{tautologically} unobstructed. 
  \item[A.2]  \emph{$H^2(L)$ vanishes:} \cite{fukaya2010lagrangian} introduced a strategy for producing a bounding cochain using the energy filtration on $\CF(L)$. Suppose that $\beta$ is a minimal energy disk class (in the sense $\omega(\beta)\leq \omega(\beta')$ for all $\beta'\neq 0$). Then the $\beta$-component of the curvature term is closed. If we additionally know that $H^2(L)=0$, the $\beta$-component of the curvature term $m^0_{\beta}$ is exact. Let $db_0=m^0_{\beta}$; $b_0$ is a bounding cochain to the lowest order. From this, we can recursively build a bounding cochain for $L$ order-by-order. This method, for instance, shows that all Lagrangian spheres are unobstructed.
\end{description}

\subsubsection*{Mirror Symmetry}
The deformation stories on the $A$ and $B$ sides are connected via mirror symmetry. We summarize the construction from the introduction of \cite{hicks2022realizability}.
Let $M_Q = N_Q^* \subset T^*Q$ be the dual lattice to $N_Q$. Then $X^A:= T^*Q/M_Q$ is naturally a symplectic manifold with a Lagrangian torus fibration $\syza: X^A \to Q$. 
Given a tropical subvariety $V \subset Q$, a Lagrangian submanifold $L_V \subset X^A$ is said to be a geometric realization of $V$ if $\syza(L_V)$ approximates $V$ in a precise sense \cite[Definition 3.0.2]{hicks2022realizability}.
In contrast to the algebraic-geometric side, for \emph{any} smooth trivalent tropical curve $V \subset Q$, \cite{mikhalkin2018examples,matessi2018lagrangian,hicks2020tropical} construct examples of Lagrangian submanifolds $L_V \subset X^A$ that geometrically realize $V$. There are also constructions for geometric lifts of $V$ within $Q$ beyond trivalent curves and hypersurfaces (see \cite{hicks2020tropical,mak2020tropically,haney2024cusped}). \cite{hicks2022realizability} conjectures that \emph{all} $V$ admit geometric lifts. It is clear that an additional condition is necessary to make the Lagrangian realizability problem reflect the $B$-realizability problem, and a natural candidate is Floer theoretic unobstructedness of $L_V$. We, therefore, say that a tropical subvariety is $A$-realizable if it has an unobstructed Lagrangian lift $L_V$.

The spaces $X^A$ and $X^B$ are called the $A$-side and $B$-side of an SYZ mirror pair \cite{strominger1996mirror}.
In \cite{abouzaid2017family}, \citeauthor{abouzaid2017family} showed that $X^A$ and $X^B$ satisfy a form of the homological mirror symmetry conjecture of \cite{kontsevich1994homological}. Using this framework, \cite{hicks2022realizability} demonstrated that, under a hypothesis on the construction of family Floer theory, an $A$-realization $L_V \subset X^A$ can be used to build a $B$-realization $Y_V \subset X^B$.

Let us summarize how the two examples we covered before are related under this duality:
\begin{description}
  \item[A/B.1]\emph{Bounds no pseudoholomorphic disks:} If $q\in \RR^n$ is a point, then $L_q=\syza^{-1}(q)$ is a Lagrangian torus and $H_2(X^A, L_q)=0$. So for purely topological reasons, the Lagrangian submanifold cannot bound any holomorphic disks and is tautologically unobstructed. Similarly, a tropical subvariety $V\subset \RR^n$ with a single vertex centered at the origin has an exact Lagrangian lift $L_V\subset X^B=T^*T^n$ (\cite[Claim 4.2.3]{hicks2022realizability}), and is tautologically unobstructed.
  \item[A/B.2]\emph{$H^2(L)$ vanishes:} When $H^2(L_V)=0$ we have unobstructedness using the method from \cite{fukaya2010lagrangian}. This occurs, for instance, when $Q=\RR^n$ and $\dim(V)=n-1$, thus recovering the original realizability results of \cite{mikhalkin2005enumerative}. A generalization of this argument observes that the obstruction term $m^0$ lives in the \emph{compactly supported} cochains of $L_V$, so the vanishing of the map $H^2_{cpt}(L_V)\to H^2(L_V)$ is sufficient to prove that $L_V$ is unobstructed---this holds, for instance, when $V\subset Q$ is a genus zero tropical curve \cite{hicks2022realizability}.
\end{description}
In contrast to the deformation theory on sheaves, the first case (lifting a point) is substantially easier to prove than the second case.

\subsection{Summary of Results}
This paper looks at a rigidity --- a combinatorially defined criterion  from tropical geometry --- and applies it to the unobstructedness problem for Lagrangian submanifolds. For expositional purposes, we first discuss unobstructedness before moving on to the interactions between tropical and symplectic geometry.
In \cref{sec:unobstructedness}, we provide an unobstructedness criterion for $L$ using the cyclic $A_\infty$ structure constructed by \citeauthor{fukaya2010cyclic} in \cite{fukaya2010cyclic} and the $H_1(L)$ grading. Then in \cref{sec:realizability} we show that when $L_V$ is a Lagrangian lift of a rigid tropical curve $V$ in a tropical abelian threefold, $L_V$ satisfies this unobstructedness condition.

\subsubsection*{From Unobstructedness...}
The topological structure we will use in this paper to obtain unobstructedness is the class $\partial \beta\in H_1(L)$. Informally, in complex dimension three the moduli space of pseudoholomorphic disks $\mathcal M_{\beta, 0}(X, L)$  is zero-dimensional, and so (to first approximation)
\[m^0_\beta = \textbf{PD}([\partial \beta])\cdot \#\mathcal M_{\beta, 0}(X, L)/S^1.\] 
where the $S^1$ action comes from domain reparameterization of the marked output point, and $\textbf{PD}$ is Poincar\'e duality.
Suppose we have the additional hypothesis that, whenever $u: (D^2, \partial D^2)\to (X, L)$ is a pseudoholomorphic disk, $[\partial u]=0 \in H_1(L)$. Then we can conclude (for any $\beta$) that $[m^0_\beta]=0$; this is a condition we can bootstrap to obtain unobstructedness.  We extend this to the setting where the statement holds in the limit of a sequence of complex structures. 
\begin{mainthm}
  \label{thm:unobstructedness}
  Let $\frac{1}{2}\dim_\RR(X)=3$, and let $L\subset X$ be a graded oriented compact Lagrangian submanifold. Let $\{J_E\}$ be a sequence of almost complex structures so that the limit loop grading group  $\Grd_\infty(L, \{J_E\})\subset H_1(X)$ (see \cref{def:filteredLoopGrading}) vanishes.
   Then $L$ is unobstructed by a bounding cochain. 
\end{mainthm}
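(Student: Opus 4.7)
The plan is to follow the heuristic sketched just before the theorem statement: build a bounding cochain inductively in the energy filtration, using at every stage the fact that in complex dimension $3$ the leading order obstruction is Poincar\'e dual to a disk-boundary class in $H_1(L)$, which is killed by the vanishing of $\Grd_\infty(L,\{J_E\})$. I would work throughout in the cyclic de Rham $A_\infty$ model of \cite{fukaya2010cyclic}, so that curvature terms $m^0_\beta$ can be paired with test cochains by the cyclic inner product and thus identified with geometric chains on $L$ built from evaluation at boundary marked points.

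First I would set up the base case. Fix $J_E$ sufficiently generic, and let $\beta\in H_2(X,L)$ be a relative class of minimal positive energy. Virtual dimension counting in the cyclic model gives $\dim\mathcal M_{1,\beta}(X,L)=n+\mu(\beta)-2=1$ when $n=3$ and $L$ is graded (so $\mu(\beta)=0$), so after quotienting by the $S^1$ action on the boundary marked point one obtains a $0$-cycle of disks; the evaluation of the marked point traces out a $1$-cycle on $L$ representing $\#(\mathcal M_{\beta,0}/S^1)\cdot[\partial\beta]\in H_1(L)$. Equivalently, as a class in $H^2(L)$ via Poincar\'e duality, $[m^0_\beta]=\#(\mathcal M_{\beta,0}/S^1)\cdot\mathbf{PD}([\partial\beta])$. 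By hypothesis every such boundary class vanishes in the limit grading group, so after passing along the sequence $\{J_E\}$ one obtains $[m^0_\beta]=0$ and may choose a primitive $b_\beta\in CF^1(L)$ with $db_\beta=m^0_\beta$, modulo terms of higher energy.

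Next I would run the induction. Assuming a bounding cochain $b_{\le E}=\sum_{\omega(\beta)\le E}b_\beta T^{\omega(\beta)}$ has been constructed so that the $b_{\le E}$-deformed curvature $m^0_{b_{\le E}}$ is supported in energies $>E$, the leading piece at the next energy level $E'$ is closed by the filtered $A_\infty$-relations. Using cyclic symmetry, this leading piece can again be written as a sum of evaluations at the boundary marked point over moduli spaces of (possibly $b_{\le E}$-decorated) disks; each such moduli space contributes, via the same dimension-count, a multiple of $\mathbf{PD}([\partial\gamma])$ for some class $\gamma$ whose boundary lies in the limit loop grading group. Vanishing of $\Grd_\infty$ therefore again produces a primitive, giving $b_{\le E'}$. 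Convergence of the sequence $b_{\le E}$ in the $T$-adic topology on $CF^1(L;\Lambda_0)$ yields the desired bounding cochain solving the Maurer--Cartan equation.

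The delicate step, and the one I expect to be the main obstacle, is the inductive step combined with the limiting procedure in $J_E$. One has to verify two things simultaneously: that after inserting $b_{\le E}$ the leading obstruction is still expressible purely in terms of disk-boundary classes (so that the cyclic identification with $\mathbf{PD}([\partial\gamma])$ is not spoiled by higher $A_\infty$-inputs), and that the primitives $b_\beta$ can be chosen compatibly as $E$ varies so that the construction survives taking the limit $J_E\to J_\infty$. The first requires a careful book-keeping, using cyclicity, of how $m^0_{b_{\le E}}$ decomposes into evaluations on moduli of disks with $b$-insertions; the second motivates the precise definition of $\Grd_\infty(L,\{J_E\})$ as a \emph{limit} loop grading group, so that its vanishing is robust under the Gromov-type compactness needed to pass from $J_E$-holomorphic disks to their limits.
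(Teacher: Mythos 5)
Your overall strategy---induct on energy, use the cyclic structure to turn obstruction classes into quantities paired against $H^1(L)$, and kill them by the hypothesis $\Grd_\infty=0$---is the same as the paper's. But you've flagged the crux as ``careful book-keeping, using cyclicity, of how $m^0_{b_{\le E}}$ decomposes into evaluations on moduli of disks with $b$-insertions,'' and that is exactly the content you've left out. The paper isolates it as \cref{lem:zeropairing}: for $\dim L=3$ and $\x\in H^1(L)$ with $\x(\partial\beta)=0$, one has $\langle m^k_\beta(\b,\dots,\b),\x\rangle=0$ for \emph{every} $k\geq 0$ and every $\b$. The proof of that lemma is purely algebraic for $k\geq 1$: one rotates $\x$ into the inputs using the cyclic symmetry \cref{eq:cyclic} and then invokes the divisor axiom \cref{eq:divisor} to factor out $\x(\partial\beta)=0$; the $k=0$ case requires a separate argument via the $S^1$-forgetful fibration $\mathfrak s_{\tilde\aa,\ii,\jj}^{-1}(0)\to\mathfrak s_{\aa,\ii,\jj}^{-1}(0)$ (the mild extension of \cite[Lemma 13.1]{fukaya2010cyclic}). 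Your proposal handles the base case ($k=0$, minimal energy) roughly correctly via the $S^1$-quotient heuristic, but in the inductive step the picture ``each such moduli space contributes a multiple of $\mathbf{PD}([\partial\gamma])$'' does not survive once $\b$-insertions appear: with degree-$1$ forms pulled back to $k$ boundary points, the relevant moduli spaces contributing to $m^k_\beta(\b^{\otimes k})$ are no longer $0$-dimensional after the $S^1$-quotient, and the evaluation image is not simply a $1$-cycle dual to a boundary class. The divisor-axiom argument sidesteps this entirely.

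There is a second, smaller gap in how you handle the family $\{J_E\}$. There is no limiting $J_\infty$ in the paper's argument; instead, at each energy level $E'$ one transports the $E$-bounding cochain from $\CF(L,J_0)$ to $\CF(L,J_{E'})$ via the pseudoisotopy $\Phi_{0,E'}$, chooses the primitive $\y_{E'}$ there (where \cref{eq:emptyVanishing} guarantees $m^{k,E'}_\beta=0$ whenever $\omega(\beta)<E'$ and $[\partial\beta]\neq 0$), and transports back via the filtered inverse $\Phi_{E',0}$. The filteredness of $\Phi_{E',0}$ is what guarantees $\b_{E'}\equiv\b_E \bmod T^E$, hence convergence. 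Your proposal's ``choosing primitives compatibly as $E$ varies so the construction survives the limit'' gestures at this but conflates it with a Gromov-compactness argument that is not actually used.
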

The main component of the proof is \cref{lem:zeropairing}, which is a minor generalization of the divisor axiom from \cite{fukaya2010cyclic} for the Fukaya algebra. 

\subsubsection*{...to Realizability.}
One of the broadest realizability criteria is non-superabundance, originally stated for tropical curves by \cite{mikhalkin2005enumerative}.
To state this condition we need to provide some notation for tropical subvarieties. 

A tropical subvariety is a regular polyhedral complex $V$ of $Q$ (satisfying a balancing condition that we will not use). We will generally use the symbols $\sigma, \tau$ to index the strata of $V$ unless they are zero or one-dimensional in which case we will reserve the symbols $v, e$. We will use $|V|, |\sigma|, \ldots$ to denote the geometric realization of these sets, for example, $\underline i_v: |v|\to Q$ is a map whose image is a point.  As an exception, we will abuse notation and write $V\subset Q$ instead of the map $\underline i_V: |V|\into Q$.
Recall that $N_Q$ is the sheaf of integral vector fields on $Q$, which gives rise to a sheaf $\underline i_V^*N_Q$ on $|V|$. We will abuse notation and simply denote this sheaf on $|V|$ by $N_Q$ when it is clear.   
For each $\sigma$, we have an integrable sublattice $N_\sigma \subset T|\sigma|$ satisfying the property that under the natural identification $T|\sigma|\subset TQ$ we have $N_\sigma\subset  N_Q$.  For $\sigma$ any stratum of $V$, let $V_\sigma$ be the star of the stratum (which is a polyhedral subcomplex). The sets $|V_v|$ form a cover of $|V|$.

We have a restriction map $\bigoplus_{v\in V} H^0(|V_v|,N_Q)\to \bigoplus_{e\in V} H^0(|V_e|,N_Q)$; let us denote the cokernel of this map by $A(V,N_Q)$. At each edge we have an inclusion $H^0(|V_e|, N_e)\hookrightarrow H^0(|V_e|,N_Q)$, allowing us to construct an exact sequence 
\begin{equation}
  \label{eq:superabundance}
  0\to \Def(V, Q)\to \bigoplus_{e\in V}H^0(|e|, N_e) \xrightarrow{\phi}     A(V,N_Q) \to \Abd(V)\to 0.
\end{equation}
where the kernel and cokernel of $\phi$ are called the deformation and superabundance space of $V$.
\begin{definition}
  \label{def:rigid}
  We say that $V$ is rigid if $\Def(V, Q)=0$ and non-superabundant if $\Abd(V)=0$. 
\end{definition}
Observe that under the simplifying assumptions that $\dim(V)=1$ and $Q=\RR^n$ we have $A(V,N_Q)= H^1(|V|,N_Q)=H^1(|V|, \ZZ^n)$ which agrees with the formulation of non-superabundance due to \cite{katz2012lifting}.
A specialization of a theorem of \cite{cheung2016faithful} states whenever $V\subset \RR^n$ is a non-superabundant curve admitting a pair of pants decomposition, $V$ is realizable.

The following inequality provides a necessary condition for non-superabundance for a tropical curve with a pair-of-pants decomposition:
\begin{align*}
0=&\dim(\Abd(V))\geq \dim(H^1(|V|,N_Q)) - \dim ( \bigoplus_{e\in V}H^0(|e|, N_e)) \\
 =& \frac{(n-3)(\text{\# compact edges})- 2n(\text{\# semi-infinite edges})}{3}+1 
 \stepcounter{equation}\tag{\theequation} \label{eq:superabundanceInequality}
\end{align*}
Observe from this inequality that a curve $V$ will dramatically fail to be superabundant whenever $n\geq 3$ and $Q$ is compact. 
In fact, $\Abd(V)$ will always contain a copy of $H_1(Q)$, so different techniques are required to prove the realizability of curves inside abelian varieties.
In the compact setting, \cite{nishinou2020realization} provides a realizability condition for tropical curves in tropical abelian surfaces; see also \cite[Corollary D]{hicks2022realizability} using mirror symmetry.

In this paper, we turn our focus to the group $\Def(V, Q)$ instead, as when $\Def(V, Q)=0$ the group $\Abd(V, N_Q)$ is as small as possible. For example, when $Q$ is a tropical abelian three-fold the relation \cref{eq:superabundanceInequality} reduces to $\dim(\Abd(V))\geq 3+ \dim(\Def(V, Q))$, with no dependence on the number of edges.
\begin{mainthm}
  \label{thm:realizable}
  Let $Q$ be a tropical abelian threefold. Let $V\subset Q$ be a tropical curve admitting a pair of pants decomposition. Suppose that $V$ is rigid.
  Then $L_V$ is unobstructed by a bounding cochain.
\end{mainthm}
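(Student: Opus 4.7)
The plan is to deduce \cref{thm:realizable} from \cref{thm:unobstructedness}. The topological hypotheses are readily verified: $X^A$ has complex dimension three, and $L_V$ is compact (since $Q$ is a compact abelian threefold), graded, and oriented by construction from the pair-of-pants decomposition. What remains is to exhibit a sequence of almost complex structures $\{J_E\}$ on $X^A$ for which the limit loop grading group $\Grd_\infty(L_V,\{J_E\})\subset H_1(X^A)$ vanishes.

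I would take $\{J_E\}$ to be the family of almost complex structures on $X^A$ adapted to the SYZ tropical degeneration as $E\to\infty$, so that the complex structure stretches in the directions of the torus fibers of $\syza\colon X^A\to Q$. By Gromov compactness together with the established tropical-symplectic correspondence, $J_E$-pseudoholomorphic disks with boundary on $L_V$ converge to tropical disks in $Q$ meeting the curve $V$ along its edges. The boundary class $[\partial u]\in H_1(X^A)$ of such a limit disk should admit a purely combinatorial description from the edge directions at which the tropical disk meets $V$; assembled via the pair-of-pants decomposition from the local vertex models, this data defines an element of the group $\bigoplus_e H^0(|e|,N_e)$ appearing in \cref{eq:superabundance}.

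The decisive step is to exploit rigidity. Balancing at the interior vertices of a limiting tropical disk forces its edge-direction data, viewed in $A(V,N_Q)$, to coincide with the restriction of a vertex-valued cocycle in $N_Q$, placing this data in $\ker(\phi)=\Def(V,Q)$. Since $\Def(V,Q)=0$ by rigidity, the edge data must itself vanish, and hence so does $[\partial u]\in H_1(X^A)$ for every limiting disk. This gives $\Grd_\infty(L_V,\{J_E\})=0$, and \cref{thm:unobstructedness} then yields that $L_V$ is unobstructed by a bounding cochain.

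The main obstacle I expect is the second step: making the tropical-symplectic dictionary precise enough to produce a well-defined map from the loop grading data $[\partial u]$ to the combinatorial group $\bigoplus_e H^0(|e|,N_e)$, controlling limit disks that wrap non-trivially around torus fibers of $\syza$, and verifying that the pair-of-pants model at each vertex of $V$ contributes the expected combinatorial balancing. The abelian threefold setting is genuinely essential here: the non-triviality of $H_1(X^A)$ and $H_1(Q)$ means that vanishing of the loop grading must be organized through a tropical quotient, and the rigidity assumption $\Def(V,Q)=0$ is precisely what guarantees that this quotient is trivial.
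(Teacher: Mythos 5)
Your high-level plan is right: invoke \cref{thm:unobstructedness}, so the whole work is to show $\Grd_\infty(L_V,\{J_E\})=0$ for a suitable family of almost complex structures. Your first step (disks tropicalize as $J_E\to\infty$, so their boundaries become trivial over the base) is the content of \cref{lem:trivialRestrictiontoVertex}, though the paper's proof there is more elementary than you suggest: it does not invoke Gromov compactness or a full tropical-symplectic dictionary, only a pointwise area bound showing the projection $\syza\circ u$ cannot represent a nontrivial class in $H_2(Q,B_\eps(|V|))$ once the fibers are stretched far enough. This gives $(\pi_V)_*[\partial u]=0\in H_1(|V|)$ and nothing finer.

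The gap is in your ``decisive step.'' You try to attach to each limiting disk a piece of data in $\bigoplus_e H^0(|e|,N_e)$ and then use balancing to force that data into $\ker(\phi)=\Def(V,Q)$. But $\Def(V,Q)$ parametrizes infinitesimal deformations of the curve $V$ itself, and it is not explained why the edge-direction data of a tropical disk (which is a different object, with its own ends) should assemble into a cocycle of this kind, nor why balancing of the disk should impose the cocycle condition $\phi=0$ on the curve $V$. No such mechanism appears in the paper, and I do not see how to make it work.

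What the paper does instead is purely cohomological. Besides the constraint $(\pi_V)_*[\partial u]=0$ from \cref{lem:trivialRestrictiontoVertex}, it uses the automatic fact (\cref{exm:trivialInclusion}) that $[\partial u]$ dies in $H_1(X^A)$ because it bounds a disk; this takes care of the fiber directions, which your argument does not address. Combining the two constraints, $[\partial u]$ lies in the kernel of $H_1(L_V)\to H_1(F)\oplus H_1(|V|)\cong H_1(\syza^{-1}(U_V))$. The whole role of rigidity is in \cref{lem:rigidGrading}: a spectral-sequence comparison of the covers of $L_V$ and $\syza^{-1}(\tilde U_V)$ by the pair-of-pants local models identifies $\coker\bigl(H^1(\syza^{-1}(\tilde U_V))\to H^1(L_V)\bigr)$ with $\Def(V,Q)$, so rigidity forces the restriction to be surjective, i.e.\ the dual map on $H_1$ to be injective. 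That injectivity, not a balancing argument, is what kills the loop grading group. Your proposal misses both the use of the a priori vanishing in $H_1(X^A)$ and the cohomology computation that translates rigidity into injectivity of $H_1(L_V)\to H_1(F)\oplus H_1(V)$.
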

The main ingredients in the proof of \cref{thm:realizable} in addition to \cref{thm:unobstructedness} are two results for tropical Lagrangian submanifolds \emph{of any dimension}:
\cref{lem:trivialRestrictiontoVertex} which restricts the boundary homology classes of pseudoholomorphic curves using tropicalization, and \cref{lem:rigidGrading} computes the first cohomology of Lagrangian lifts of rigid tropical subvarieties. 

We conclude the paper with \cref{sec:examples},which includes some examples of rigid tropical curves.

\subsection{Limitations of our Approach}
There are several limitations to our approach.
For example, the condition that $\dim(Q)=3$ in \cref{thm:realizable} comes from our inability to straightforwardly generalize \cref{thm:unobstructedness} beyond dimension three. However, we expect that the moduli spaces of pseudoholomorphic disks with boundary on a tropical Lagrangian are sufficiently well-behaved to conjecture the following:
\begin{conjecture}
  \label{conj:higherDimension}
  Let $Q$ be a tropical abelian variety, and $V\subset Q$ be a rigid tropical curve admitting a pair-of-pants decomposition. Then $L_V$ is unobstructed by a bounding cochain.
\end{conjecture}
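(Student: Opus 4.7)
The proof would closely mirror that of \cref{thm:realizable}, with the main obstacle being a higher-dimensional generalization of the unobstructedness criterion \cref{thm:unobstructedness}. The tropical input is already dimension-agnostic: \cref{lem:trivialRestrictiontoVertex} and \cref{lem:rigidGrading} are both stated for Lagrangian lifts of tropical subvarieties in $Q$ of arbitrary dimension, so the only missing piece is a version of \cref{thm:unobstructedness} that applies beyond $\dim(X)=6$.

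The first step is to set up the tropical side exactly as in \cref{thm:realizable}. Choose a sequence $\{J_E\}$ of almost complex structures on $X^A$ adapted to the large-volume limit of $X^A \to Q$. For any $J_E$-pseudoholomorphic disk $u$ with boundary on $L_V$, \cref{lem:trivialRestrictiontoVertex} identifies $[\partial u] \in H_1(L_V)$ with a sum of local boundary classes at the vertices of $V$ controlled by the tropicalization of $u$. The rigidity hypothesis on $V$ combined with \cref{lem:rigidGrading} then forces these local contributions to sum to zero, so one obtains $\Grd_\infty(L_V, \{J_E\}) = 0$. This step is a formal generalization of the corresponding step in \cref{thm:realizable} and does not use $\dim(Q)=3$.

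The hard step is producing an analog of \cref{thm:unobstructedness} in higher dimensions. In complex dimension three the moduli $\mathcal M_{\beta,0}(X,L)/S^1$ is zero-dimensional, and one can argue that the cohomology class of the obstruction $m^0_\beta$ is proportional to $\mathrm{PD}([\partial\beta])$ with a weight given by a signed count of unparameterized disks. In higher dimensions $\mathcal M_{\beta,0}(X,L)/S^1$ has dimension $n-3$, and the pushforward under the output evaluation is a genuine $2$-form on $L_V$ whose cohomology class is not directly captured by $[\partial\beta]$ alone. My plan would be to instead test $[m^0_\beta]$ against classes in $H_2(L_V)$: using the cyclic structure of \cite{fukaya2010cyclic} and a cohomological extension of the divisor-type identity \cref{lem:zeropairing}, one would rewrite each such pairing as a linear functional in $[\partial\beta]$, which then vanishes by the hypothesis $\Grd_\infty(L_V,\{J_E\})=0$. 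Granted such a vanishing of $[m^0_\beta]\in H^2(L_V)$, the bounding cochain is constructed order-by-order using the energy-filtration induction of \cite{fukaya2010lagrangian}, exactly as at the end of the proof of \cref{thm:unobstructedness}.

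The principal obstacle is establishing the higher-dimensional cohomological divisor axiom with enough control to carry out this induction inside the de Rham Fukaya algebra of \cite{fukaya2010cyclic}. Doing so requires handling the $S^1$-action at the output marking, the forgetful maps used in the divisor axiom, and the codimension-one boundary strata of the compactified moduli, all compatibly with the $H_1(L_V)$-grading. The rigidity of $V$ should enter exactly so that the resulting class-level obstructions vanish, just as it does in dimension three; identifying the correct higher-dimensional replacement for the unparameterized disk count used in \cref{thm:unobstructedness} --- and verifying that the inductive step preserves the $\Grd_\infty$-vanishing as one moves up the energy filtration --- is the main technical challenge.
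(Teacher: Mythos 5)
This statement is a \emph{conjecture} in the paper, not a theorem: the author explicitly leaves it open and states (in the ``Limitations of our Approach'' section) that the restriction to $\dim(Q)=3$ in \cref{thm:realizable} comes from ``our inability to straightforwardly generalize \cref{thm:unobstructedness} beyond dimension three.'' There is therefore no proof in the paper to compare against. Your diagnosis of the situation is accurate and agrees with the author's own assessment: \cref{lem:trivialRestrictiontoVertex} and \cref{lem:rigidGrading} are dimension-independent, and the bottleneck is an analogue of \cref{thm:unobstructedness} when $\dim_\CC(X) > 3$.

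One point of caution about your proposed route. You suggest pairing $[m^0_\beta]\in H^2(L_V)$ against $H_2(L_V)$ (equivalently $H^{n-2}(L_V)$) and then rewriting each pairing as a linear functional in $[\partial\beta]$ via a ``cohomological extension'' of \cref{lem:zeropairing}. But \cref{lem:zeropairing} relies crucially on the divisor axiom \cref{eq:divisor}, which is specific to degree-$1$ inputs and whose geometric mechanism is integrating over the $S^1$-fiber of the forgetful map. When $n>3$ the unparameterized moduli space $\mathcal M_{\beta,0}/S^1$ is $(n-3)$-dimensional, and the class $(\uev_0)_*[\mathcal M_{\beta,0}]\in H^2(L_V)$ genuinely depends on how the evaluation image wraps around $L_V$, not only on $[\partial\beta]\in H_1(L_V)$. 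So there is no reason to expect $\langle m^0_\beta, \eta\rangle$ for $\eta\in H^{n-2}(L_V)$ to be a functional of $[\partial\beta]$ alone; the vanishing of $\Grd_\infty$ does not by itself kill $[m^0_\beta]$. This is exactly why the paper poses \cref{conj:higherDimension} as a conjecture rather than a corollary, and why the author adds the caveat ``we expect that the moduli spaces \ldots are sufficiently well-behaved'': some additional geometric input about these moduli spaces (beyond the boundary-class constraint) appears to be required. Your outline is a reasonable starting point, but the ``higher-dimensional cohomological divisor axiom'' you invoke is not just a technical extension --- it is the open problem.
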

Additionally, we may not immediately apply the results of \cite{hicks2022realizability} to obtain a $B$-realizability condition, as the Floer cohomology considered in that paper was based on a Morse cochain/ stabilizing divisor model from \cite{charest2019floer}, while the unobstructedness criterion we prove here uses Floer theory with de Rham cochains/ Kuranishi structures \cite{fukaya2010cyclic}. 
However, it is natural to conjecture:
\begin{conjecture}
  \label{conj:bRealizability}
  Let $V\subset Q$ be as in \cref{conj:higherDimension}. Then $V$ is $B$-realizable.
\end{conjecture}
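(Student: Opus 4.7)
The plan is to reduce $B$-realizability to the $A$-side unobstructedness result (\cref{thm:realizable} in dimension three, or \cref{conj:higherDimension} in general) by transporting $L_V$ across a suitable form of homological mirror symmetry. Concretely, given the unobstructed Lagrangian pair $(L_V, b)$, I would construct a family Floer functor $\mathcal F: \Fuk(X^A) \to D^b\Coh(X^B)$ along the lines of \cite{abouzaid2017family}, and analyze the sheaf $\mathcal F(L_V, b)$ on the rigid analytic mirror. The goal is to show that this sheaf is quasi-isomorphic to $i_* \mathcal O_{Y_V}$ for an analytic subvariety $Y_V \subset X^B$ with $\tropb(Y_V) = V$, at which point $V$ is $B$-realized by definition.

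First I would verify that $\mathcal F(L_V, b)$ has the correct support. The image under the family Floer functor of a fiber torus over a point $q\in Q$ should be a skyscraper at the mirror point, and more generally the fiberwise Floer cohomology $HF^\bullet((L_V, b), L_q)$ detects whether $q$ lies in the tropicalization of the support. Using the construction of $L_V$ from a pair-of-pants decomposition, together with a local computation at each vertex of $V$ (where $L_V$ is modeled on the Lagrangian pair-of-pants of \cite{mikhalkin2018examples,matessi2018lagrangian}), the support of $\mathcal F(L_V, b)$ should be forced to equal $\tropb^{-1}(V)$. Second, I would prove that $\mathcal F(L_V, b)$ is concentrated in a single cohomological degree by checking that the fiberwise Floer cohomology is concentrated in degree zero on an open dense set of smooth points of $V$, and then arguing via coherence and purity that this forces the sheaf to be a pushforward from a $1$-dimensional analytic subscheme $Y_V$.

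The third step is to identify this subscheme with the structure sheaf of $Y_V$ rather than some twist by a line bundle. The tropical curve $V$ has a pair-of-pants decomposition, and the rigidity hypothesis combined with \cref{lem:rigidGrading} constrains $H^1(L_V)$ enough that the choice of bounding cochain $b$ together with a flat unitary line bundle on $L_V$ can be matched with a line bundle on $Y_V$. By tuning $b$ and the $U(1)$-local system to the trivial choice on each pair-of-pants piece, one expects $\mathcal F(L_V, b) \simeq i_* \mathcal O_{Y_V}$.

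The main obstacle, as the paper points out explicitly, is compatibility between Floer-theoretic frameworks. The realizability-from-mirror-symmetry argument of \cite{hicks2022realizability} is built on the Morse/stabilizing divisor model of \cite{charest2019floer}, whereas \cref{thm:unobstructedness} is proved in the de Rham/Kuranishi model of \cite{fukaya2010cyclic}. To bridge them one needs either a direct family Floer functor for the de Rham model (promoting \cite{abouzaid2017family} to this setting), or a comparison quasi-equivalence between the two Fukaya categories preserving unobstructedness and bounding cochains. Both require substantial technical work, and this is the step I would expect to consume most of the effort; the geometric identification of $\mathcal F(L_V, b)$ with $i_* \mathcal O_{Y_V}$, by contrast, should follow formally once a functor of the appropriate form is in hand.
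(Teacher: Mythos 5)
This statement is labeled as a \emph{conjecture} in the paper, and the paper does not prove it --- so there is no proof of record to compare your attempt against. What you have written is a strategy sketch rather than a proof, and you are right to present it that way: the outline you give (apply a family Floer functor in the style of \cite{abouzaid2017family} to the unobstructed pair $(L_V,b)$, verify support via fiberwise Floer cohomology against torus fibers, argue purity and concentration in a single degree, and then normalize away line-bundle twists using the rigidity constraint on $H^1(L_V)$) is precisely the route the paper gestures at via \cite{hicks2022realizability}. You have also correctly isolated the genuine obstruction, which is the same one the paper names in the remark following the conjecture: \cref{thm:unobstructedness} lives in the de Rham/Kuranishi framework of \cite{fukaya2010cyclic}, while the mirror-symmetry-to-$B$-realizability machinery of \cite{hicks2022realizability} is built on the Morse/stabilizing-divisor model of \cite{charest2019floer}, and no comparison functor carrying bounding cochains across these frameworks is currently available. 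The paper suggests that extending \cite{haney2024infinity} to domain-dependent perturbations would be one way to close this gap.

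So: no error, but also no proof. Until the model-comparison step is actually carried out (or a cyclic $A_\infty$ structure compatible with \cite{charest2019floer} is constructed), your steps two and three remain conditional, and the statement stays a conjecture. If you want to make progress, the place to push is exactly where you said the effort would concentrate: either build the family Floer functor directly for the de Rham model, or produce a filtered $A_\infty$ quasi-equivalence between $\CF(L_V)$ in the two models that is compatible with the Novikov filtration and hence transports bounding cochains.
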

\begin{remark}
  During the preparation of this article, we learned that \cite{haney2024infinity} constructed a weakened version of the cyclic structure for the Fukaya algebra with underlying cochains given by Morse theory. While the paper restricts to the setting where evaluation maps are submersions (as in \cite{solomon2016point}), extending \citeauthor{haney2024infinity}'s work to the domain-dependent perturbation construction of \cite{charest2019floer} would allow us to deduce a $B$-model realizability criterion.
\end{remark}
Instead of applying homological mirror symmetry to deduce $B$-realizability of $V$ from \cref{thm:realizable}, we could instead write an algebraic-geometric proof of \cref{thm:realizable}.
Both \cref{thm:unobstructedness} and \cref{lem:rigidGrading} can be stated in terms of the $B$-side deformation problem. However, the key piece---\cref{lem:trivialRestrictiontoVertex} which constrains the homological grading of the deformation---remains elusive.  
\begin{question}
  What is the algebraic-geometric analog of using a limiting set of almost-complex structures on the $A$-side in terms of deformation theory?
\end{question}

\subsection{Acknowledgements}
I would like to thank \'Alvaro Mu\~niz Brea, Yusuf Bar{\i}\c{s} Kartal, and Nick Sheridan for the many helpful discussions around this topic;  Dhruv Ranganathan for answering my many questions about tropical geometry; and Andrew Hanlon for providing feedback on an early version of this paper.
I learned of the tropicalization method used in \cref{lem:trivialRestrictiontoVertex} from Brett Parker at the 2019 MATRIX workshop ``Tropical Geometry and Mirror Symmetry.'' 
Finally, I must thank Francesca Carocci, who pointed out to me that all tropical curves in Abelian varieties are superabundant; this significantly changed the goal of this article to focus on rigidity instead.
This work was supported by EPSRC Grant EP/V049097/1.

\section{\texorpdfstring{$A$}{A}-side: Unobstructedness from Loop Grading}
\label{sec:unobstructedness}
In this section we provide an unobstructedness criteria. In \cref{subsec:cyclicIntroduction} we review the construction of the cyclic $A_\infty$ structure on Floer cochains following \cite{fukaya2010cyclic}. Following that section, we provide a minor generalization of the divisor axiom in \cref{subsec:divisorAxiom} and use that to prove an unobstructedness criterion in \cref{subsec:unobstructednessCriterion}.
\subsection{Cyclic Structures in Floer Cohomology}
\label{subsec:cyclicIntroduction}
We recall some of the structure of the Fukaya $A_\infty$ algebra from \cite{fukaya2010cyclic}.
Let $L_V\subset X^A$ be a graded spin Lagrangian submanifold and $J$ a choice of $\omega$-compatible almost complex structure.  The Fukaya $A_\infty$ algebra is a gapped filtered $A_\infty$ algebra where:
\begin{itemize}
  \item the cochains $\CF(L, J)$ are de Rham cochains with values in $\Lambda_0$, the universal Novikov ring $\Lambda_{\geq 0}:=  \left\{\sum_{i=0}^\infty  a_i T^{E_i}\st a_i\in \CC, E_i \in \RR_{\geq 0}, \lim_{i\to\infty} E_i = \infty\right\}.$
  \item the filtered $A_\infty$ product is a collection of maps $m^k_\beta: \CF(L, J)^{\otimes k}\to T^{\omega(\beta)}\CF(L, J)$ indexed by $\beta\in H^2(X, L)$ described below.
\end{itemize}
The geometric intuition behind the definition of these products is that for each $\beta\in H_2(X, L)$ and $k\in \NN$ we construct the space 
\[\uM_{k, \beta}(X, L, J):=\{(D^2, \underline \theta, u) \st u:(D^2, \partial D^2)\to (X, L) m u^*J = j_u, [u]=\beta\}/\sim\]
 of pseudoholomorphic disks with $k$-marked boundary points modulo domain reparameterization. Here, $\underline \theta= \theta_0< \theta_1< \cdots < \theta_{k}$ is a set of $k$-marked ordered boundary points on the disk. For each $0 \leq i \leq k$, there is an evaluation map $\uev_i: \uM_{k, \beta}(X, L, J)\to L$.
 
 For the sake of exposition, we now assume that all moduli spaces are transversely cut-out manifolds with corners, all evaluation maps are submersions, and that all perturbations chosen preserve symmetries and forgetful maps. These assumptions rarely hold and in \cref{subsubsec:fuk10} we outline the approach from \cite{fukaya2010cyclic} to overcome this problem. If these assumptions held, we would for any $\beta\neq 0$ define the $A_\infty$ product structure by 
\begin{equation}
  \label{eq:firstApproximation}
  m^k_\beta(\x_1\otimes \cdots \otimes \x_k) \text{``:=''}(\uev_0)_*\left(\bigwedge_{0 < i \leq k} \uev_i^*\x_i\right)
\end{equation}
where $(\uev_0)_*$ is integration along the fiber of the evaluation map at the output marked point. For $\beta=0$, we use the differential graded algebra structure coming from differential forms.
\begin{align*}
  m^1_0=d, & & m^2_0=\wedge, & & m^k_0=0 \text{ otherwise.}
\end{align*}

\subsubsection{Properties of the Fukaya Algebra}
We now describe the expected properties of the maps $m^k_\beta: \CF(L, J)^{\otimes k}\to \CF(L, J)$ from our placeholder definition \cref{eq:firstApproximation}. By definition, $\CF(L, J)$ is a deformation of the de Rham DGA. 
An immediate consequence of \cref{eq:firstApproximation} is that when the moduli space $\uM_{k, \beta}(X, L, J)$ is empty the associated product vanishes:
\begin{equation}
  \label{eq:emptyVanishing}
  m^k_\beta=0 \text{ whenever }  \uM_{k, \beta}(X, L, J) =\emptyset , \beta\neq 0.
\end{equation}
The boundary compactification of the $1$-dimensional moduli space of pseudoholomorphic curves with boundary on $L$ shows that these operations satisfy the graded $A_\infty$ relations:
\begin{equation}
  \label{eq:Ainfinity}
  \sum_{\substack{\beta'+\beta''=\beta\\k_1+j+k_2=k}} (-1)^{\clubsuit_{\underline \x, k_1}}m^{k_1+k_2+1}_{\beta'} (\id^{\otimes k_1} \otimes m^j_{\beta'}\otimes \id^{\otimes k_2})(\underline \x)=0
\end{equation}
where $\underline \x = \x_1\otimes \cdots \otimes\x_k$, and $\clubsuit(\underline \x, k_1)= k_1+\sum_{i=1}^k \deg(\x_i)$.
There is, additionally, an action of $\ZZ/k\ZZ$ on $\uM_{k, \beta}(X, L, J)$ which comes from cyclically relabeling the points and commutes with the evaluation maps. 
This structure presents itself algebraically via a symmetry of the $A_\infty$ products
\begin{equation}
  \label{eq:cyclic}
  \langle m^k_\beta (\x_1\otimes \cdots \otimes \x_k), \x_0 \rangle = (-1)^{(\deg \x_0)\cdot \sum_{i=1}^k \deg \x_k}\langle m^k_\beta (\x_0\otimes \cdots \otimes \x_{k-1}), \x_k \rangle. 
\end{equation}
where $\langle \x_1, \x_2\rangle = \int_L \x_1\wedge \x_2$ is the Poincar\'{e} pairing on de Rham chains. The discrepancy in sign compared to \cite{fukaya2010cyclic} arises from a shift in the degree by 1 (see \cite[Equation 1.1]{fukaya2010cyclic}).

Finally, there is a ``forgetting point'' relation between the moduli spaces of pseudoholomorphic curves: for each $0 < i \leq k$ there is a forgetful map $\upi_{k, i}: \uM_{k, \beta}(X, L, J)\to \uM_{{k-1}, \beta}(X, L, J)$ such that the following diagram commutes for all $0< j\neq i\leq k$:
\[
  \begin{tikzcd}
    \mathcal \uM_{k, \beta} \arrow{r}{\upi_{k, i}} \arrow{dr}{\uev_j} &  \mathcal \uM_{k-1, \beta}\arrow{d}{\uev_{\eps_{ij}}}\\
     & L
  \end{tikzcd}
\]
where $\eps_{ij}=j$ when $i\leq j$ and $j-1$ otherwise. 
The fiber of the forgetful map is $\upi^{-1}_{k, i}(u)=(\theta_i^-, \theta_i^+)$, and $\theta_i^\pm$ are the $\partial D^2$-coordinate values of the point immediately left or right of the $i$-the marked point. By integrating along the $(\theta_i^-, \theta_i^+)$ coordinate and applying Stokes' theorem, one can prove the divisor axiom:  given $\x\in H^1(L)$,
\begin{equation}
  \label{eq:divisor}
  \sum_{i=0}^k(-1)^{(\deg \x)\cdot \sum_{j=1}^i \deg \b_j} m^{k+1}_\beta (\b_1\otimes \cdots \otimes \b_{i}\otimes \x \otimes \b_{i+1}\otimes \cdots \otimes \b_k) = \x(\partial \beta) m^k_\beta(\b_1\otimes \cdots \otimes \b_k).
\end{equation}

\subsubsection{Summary of Construction of Product Structure}
\label{subsubsec:fuk10}
The reason that this only provides a first approximation of the construction is that the space $\uM_{k, \beta}(X, L, J)$ is rarely a smooth manifold, and even when it is, the evaluation map $\uev_0$ is rarely a submersion. These two issues are handled in \cite{fukaya2010cyclic} using abstract perturbation theory to upgrade the underlined constructions from the previous discussion.

\begin{theorem}[\cite{fukaya2010cyclic}]
  Let $L\subset X$ be a graded spin Lagrangian submanifold and $J: TX\to TX$ a choice of compatible almost complex structure. There exists a choice of abstract perturbation datum $\mathcal P$ defining operations $m^k_\beta: \CF(L, J, \mathcal P)^{\otimes k}\to \CF(L, J, \mathcal P)$ satisfying \cref{eq:emptyVanishing,eq:Ainfinity,eq:cyclic,eq:divisor}. The structure is independent of $J$ and additional choices $\mathcal P$ up to pseudoisotopy of filtered $A_\infty$ algebra. As a consequence, $\CF(L, J, \mathcal P)$ and $\CF(L, J', \mathcal P')$ are filtered $A_\infty$ isomorphic.
\end{theorem}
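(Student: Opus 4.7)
The plan is to implement the Kuranishi-theoretic construction of \citeauthor{fukaya2010cyclic} that promotes the naive formula \cref{eq:firstApproximation} into a consistent filtered $A_\infty$-algebra satisfying all the asserted properties. First, I would equip the Gromov compactification of each moduli $\uM_{k, \beta}(X, L, J)$ with a Kuranishi structure: locally it is presented as the zero set of a section of a finite-rank obstruction bundle on an orbifold-with-corners chart, with compatible coordinate changes, arising from the Fredholm analysis of the Cauchy--Riemann operator at stable disks and a choice of fixed finite-rank complement to the cokernel of its linearization.

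Second, I would select a family of abstract multi-section perturbations $\{\mathfrak s_{k, \beta}\}$ of these Kuranishi sections satisfying three simultaneous compatibility requirements. \emph{Boundary compatibility:} on each codimension-one stratum corresponding to a disk-bubble splitting, the restriction of $\mathfrak s_{k, \beta}$ equals the appropriate fiber product over $L$ of the $\mathfrak s$'s of the smaller pieces. \emph{Cyclic compatibility:} $\mathfrak s_{k, \beta}$ is equivariant under the $\ZZ/(k+1)\ZZ$-action cyclically permuting the boundary marked points. \emph{Forgetful compatibility:} for each $1 \leq i \leq k$, $\mathfrak s_{k, \beta}$ is the pullback of $\mathfrak s_{k-1, \beta}$ along the forgetful map $\upi_{k, i}$. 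With these perturbations in place, the product $m^k_\beta$ is defined by fiber integration of differential forms along the perturbed zero locus, promoting \cref{eq:firstApproximation} to the non-transverse setting.

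Third, I would verify the five listed properties in turn. \cref{eq:emptyVanishing} is immediate. The $A_\infty$-relations \cref{eq:Ainfinity} follow from the boundary compatibility together with Stokes' theorem applied to the 1-dimensional perturbed moduli. The cyclic symmetry \cref{eq:cyclic} is inherited from the equivariance of the perturbations, with signs tracked via the orientations on $\uM_{k, \beta}$. The divisor axiom \cref{eq:divisor} comes from the forgetful compatibility by integrating along the $(\theta_i^-, \theta_i^+)$-fiber of $\upi_{k, i}$ and applying Stokes' theorem. For the final assertion that $\CF(L, J, \mathcal P)$ is independent of $(J, \mathcal P)$ up to pseudoisotopy of filtered $A_\infty$-algebra, I would interpolate between two choices $(J, \mathcal P)$ and $(J', \mathcal P')$ by a path and repeat the Kuranishi construction on the parameterized moduli, obtaining an $A_\infty$-structure on $\CF(L) \otimes \Omega^\bullet([0, 1])$ that pseudoisotopes the two endpoint algebras.

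The hardest step is arranging the three compatibility requirements on the multi-sections simultaneously, which is the central technical achievement of \cite{fukaya2010cyclic}. Forgetful compatibility is the most delicate: forgetful maps interact nontrivially with boundary strata, and a perturbation pulled back from $\uM_{k-1, \beta}$ must still respect the boundary and cyclic structures of $\uM_{k, \beta}$. The resolution is to build the perturbations by joint induction on energy $\omega(\beta)$ and the number of marked points $k$, incorporating a cyclic symmetrization at each stage and checking step by step that all three compatibilities survive the induction.
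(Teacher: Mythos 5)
Your proposal outlines the correct high-level strategy (Kuranishi structures, compatible multisection perturbations, fiber integration, and a pseudoisotopy for the comparison statement), but it elides the central technical device that the paper's summary emphasizes and that \cite{fukaya2010cyclic} is largely devoted to: the \emph{thickening of the moduli spaces by continuous families of multisections}. Even after choosing a multisection perturbation $\mathfrak s_{k,\beta}$ making the Kuranishi section transverse, the restricted evaluation map $\ev_0\colon \mathfrak s_{k,\beta}^{-1}(0) \to L$ is still generically not a submersion---the perturbed zero locus has dimension equal to the virtual dimension, which may well be smaller than $\dim L$. Since integration along the fiber of $\ev_0$ requires a submersion, the step in your outline where $m^k_\beta$ is ``defined by fiber integration of differential forms along the perturbed zero locus'' is not yet defined. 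The resolution, following \cite{fukaya2010cyclic}, is to thicken each chart $V_\aa$ by a parametrizing space $W_\aa$, take continuous multisections $\mathfrak s_\aa\colon W_\aa \times V_\aa \to W_\aa\times E_\aa$, and average over $W_\aa$ against a chosen volume form $\rho_\aa$, so that $\ev_0$ restricted to $\mathfrak s_{\aa,\ii,\jj}^{-1}(0)$ becomes a genuine submersion while degrees are corrected by wedging with $\rho_\aa$ (compare \cref{eq:mdefinition}). Consequently, the boundary, cyclic, and forgetful compatibilities you impose must be arranged not only for the multisections but also for the parametrizing spaces $W_\aa$, the volume forms $\rho_\aa$, the partitions of unity, and the poset maps between good coordinate systems $\mathfrak U_{k+1,\beta}\to\mathfrak U_{k,\beta}$. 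The remainder of your outline---joint induction on energy and marked-point number, and independence of $(J,\mathcal P)$ via an $A_\infty$-structure on $\CF(L)\otimes\Omega^\bullet([0,1])$---matches the intended argument.
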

 We include a summary of the proof from \cite{fukaya2010lagrangian} which will allow us to introduce the components that go into the definition of $m^k_\beta$. As a replacement for smoothness of the moduli space $\uM_{k, \beta}(X, L, J)$, \cite[Section 3]{fukaya2010cyclic} exhibits choices of Kuranishi structure for every $k, \beta$ (we will denote the collection of data by $\mathcal P=\{\mathcal P_{k, \beta}\}$). Since $\uev_0$ is not a submersion, one replaces $\uM_{k, \beta}(X, L, J)$ with a thickening by a high-dimensional family of perturbations so that the resulting map $\ev_0$ from the thickened moduli space is a submersion; to correct for the increase of dimension of the domain we subsequently average over those perturbations. The result is a construction for the $m^k_\beta$ which retains many of the nice properties suggested by the placeholder definition \cref{eq:firstApproximation}, including \cref{eq:emptyVanishing}.
To obtain the relations \cref{eq:Ainfinity,eq:cyclic,eq:divisor} the Kuranishi structures and choices of perturbations are chosen compatibly with these forgetful maps and boundary relations.

More precisely: a Kuranishi structure assigns a Kuranishi neighborhood $(V_u, E_u, \Gamma_u, \phi_u, s_u)$ to each $u\in \mathcal \uM_{k, \beta}(X, L, J)$ where $E_u\to V_u$ is a finite dimensional vector bundle, $s_u: V_u\to E_u$ is a section, and $\phi_u: s_u^{-1}(0)\to \uM_{k, \beta}(X, L, J)$ is a homeomorphism to an open subset containing $u$. $\Gamma_u$ is a finite group acting on $E_u$ for which the vector bundle and sections are equivariant. The precise axioms are given in \cite[Section 2]{fukaya2010cyclic}, but informally the axioms state that $\uM_{k, \beta}(X, L, J)$ is locally modeled by a (possibly non-regular, equivariant) zero set. 
From a Kuranishi structure, one constructs a good coordinate system $\{(V_\aa, E_\aa, \Gamma_\aa, \phi_\aa, s_\aa)\}_{\aa\in \mathfrak U}$ for $\mathcal P_{k, \beta}$.  Here, $\mathfrak U$ is a poset indexing the charts. One can perform geometric constructions by replacing $\uM_{k, \beta}(X, L, J)$ with the zero locus of a regular section of $E_\aa\to V_\aa$. 
Because regular sections $s_\aa: V_\aa \to E_\aa$ may not exist (due to the necessity that sections be $\Gamma_\aa$-equivariant) we use multisections instead.  This requires further refining the cover of $V_\aa=\bigcup_{\ii\in \II_\alpha} U_{\aa, \ii}$, taking sections $s_{\aa, \ii}: U_{\aa,\ii}\to E_\aa|_{U_{\aa, \beta}}$, and picking a partition of unity $\chi_{\aa, \ii}: V_\aa\to \RR$ subordinate to the $U_{\aa, \ii}$. We let $\ell_{\aa, \ii}$ denote the degree of multiplicity of the multisection. We promote the maps $\uev_i, \upi_{k,j}$ to maps $\ev_i, \pi_{k,j}$ defined on the Kuranishi charts. 

In earlier work, one would use $s_{\aa, \ii}^{-1}(0)$ as a replacement for the moduli space $\uM_{k, \beta}(X, L, J)$.  However, since the restriction of evaluation maps $\ev_0: s_{\aa, \ii}^{-1}(0)\to L$ may not be a submersion, we ``thicken up'' the entire moduli space. This requires picking a parametrizing space $W_\aa$ for each $\aa$ and taking continuous multi-sections $\mathfrak s_\aa: W_\aa\times V_\aa \to W_\aa \times E_\aa$, satisfying \cite[Definition 4.1]{fukaya2010cyclic}. To account for the fact that $\mathfrak s_\aa^{-1}(0)$ has the wrong dimension (compared to $s_\aa^{-1}(0)$,) we choose volume forms $\rho_\aa$ for the $W_\aa$  to obtain the pair $(\mathfrak s_\aa^{-1}(0), \rho_\aa)$; $\rho_\aa$-averaged operations (like fiberwise integration) will have the correct degree. The continuous multisections can be chosen so that $s_\aa^{-1}(0)=\emptyset$ whenever $\uM_{k, \beta}(X, L, J)=\emptyset$.

Finally, the construction of Kuranishi charts and choices of perturbations (given by continuous multisections) need to be made compatible with the forgetful maps and $A_\infty$ boundary relations in the sense of \cite[Definition 3.1]{fukaya2010cyclic}. Included in the choice of Kuranishi structure and continuous multisections compatible with the forgetful and evaluation maps is a map of posets between $\mathfrak U_{k+1, \beta}$ and $\mathfrak U_{k,\beta}$ which sends the index $\tilde \aa\to \aa$; compatibility between the forms $\rho_{\tilde \aa}, \rho_\aa$ and choices of partitions of unity.

With this structure defined, we obtain from \cite[Eq. 4.8 and 7.1]{fukaya2010cyclic} a product on de Rham forms
\begin{equation}
  \label{eq:mdefinition}
  m^k_\beta (\x_1\tensor\cdots \tensor \x_k):= \sum_{\aa\in \mathfrak U}  \sum_{\ii\in \II_\aa} \sum_{\jj=1}^{\ell_\ii}\left. (\ev_0)_*\left( \frac{1}{\ell_\ii} \chi_\aa\chi_{\ii, \aa}\left(\bigwedge_{j=0}^k \ev_{j}^*\x_j\right)\right) \wedge \rho_\aa\right|_{\mathfrak{s}_{\aa, \ii,\jj}^{-1}(0)} \end{equation}
where the evaluation functions $\ev_{j}: U_{\aa,\ii}\to L$ agree with $\underline \ev_{j}$  upon restriction to $\uM_{k, \beta}(X, L, J)$. The upshot of \cref{eq:mdefinition} is that the set $\mathfrak{s}_{\aa,\ii,\jj}^{-1}(0)$ is a smooth manifold of sufficiently high dimension so that the restricted evaluation map $\ev_0|_{\mathfrak{s}_{\aa,\ii,\jj}^{-1}(0)}$ remains a submersion. Observe that \cref{eq:mdefinition} agrees with \cref{eq:firstApproximation} except that we are pulling/pushing to $\mathfrak{s}_{\aa, \ii, \jj}^{-1}(0)$ as a proxy for $\uM_{k, \beta}(X, L, J)$, and that there is some reweighing so that the terms corresponding to different charts glue together. 

These product terms depend on choices beyond the Lagrangian $L$ and almost complex structure $J$. We will denote the choices made in the abstract perturbation datum by $\mathcal P$, so that $\CF(L, J, \mathcal P)$ is a filtered $A_\infty$ algebra. A fundamental result of \cite{fukaya2010cyclic} is that different choices of $J$ and $\mathcal P$ lead to filtered $A_\infty$ isomorphic $A_\infty$ algebras. 
For this reason, when the choice of perturbation datum is unimportant, we will simply write $\CF(L, J)$.

\subsection{\texorpdfstring{Divisor axiom for $m^0$ in dimension 3}{Divisor axiom for the curvature term in dimension 3}}
\label{subsec:divisorAxiom}
In the setting of dimension three, we can write a divisor axiom for $m^0$ as well.
\begin{lemma}
  \label{lem:zeropairing}
  Let $\dim(L)=3$ and suppose that $\x\in H^1(L)$ satisfies $\x(\partial \beta)=0$. For any $\b\in \CF(L)$ we have 
  \begin{equation} 
   \langle m^k_\beta(\b, \ldots, \b), \x\rangle =0.
  \end{equation}
 \end{lemma}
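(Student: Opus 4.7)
The plan is to prove an output-side analogue of the standard divisor axiom: I want to establish
\[
\langle m^k_\beta(\b^{\otimes k}),\x\rangle \;=\; \x(\partial\beta)\cdot C_{k,\beta}(\b)
\]
for some scalar $C_{k,\beta}(\b)$ that depends only on $\b$ and the disk moduli. The lemma then follows immediately under the hypothesis $\x(\partial\beta)=0$. The argument runs parallel to the proof of \cref{eq:divisor}, but replaces the forgetful map $\pi_{k,i}$ that drops a single input marked point by the forgetful map that drops \emph{every} input. A direct attempt via \cref{eq:divisor} plus cyclic symmetry \cref{eq:cyclic} would yield only a parity-dependent identity --- fine for odd $k+1$ but tautological for even $k+1$ --- so the proof instead works geometrically on the moduli space.

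First I would unfold \cref{eq:mdefinition} and apply the projection formula to rewrite the pairing as an integral of $\ev_0^*\x \wedge \ev_1^*\b \wedge \cdots \wedge \ev_k^*\b$ over (the Kuranishi thickening of) $\uM_{k,\beta}$. Since $\ev_0$ sees only the disk $u$ and the output marked point $\theta_0$, it factors through the forgetful map $\uM_{k,\beta}\to \uM_{0,\beta}$ that erases $\theta_1,\ldots,\theta_k$. Fubini with respect to this fibration separates the integral into a fiber integral of $\bigwedge_i \ev_i^*\b$ over the $k$-simplex parameterizing ordered input positions on $\partial D^2\setminus\{\theta_0\}$, leaving a form on $\uM_{0,\beta}$ that depends only on $\b$ and $u$.

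The dimension hypothesis $\dim L = 3$ enters decisively at this stage: $\uM_{0,\beta}$ has dimension $\dim L - 2 = 1$, and rotation of $\theta_0$ along $\partial D^2$ presents it as a free $S^1$-bundle over the $0$-dimensional quotient $\uM_{0,\beta}/S^1$. On each $S^1$-orbit, $\ev_0$ parameterizes the boundary loop of the underlying disk, which represents $\partial\beta\in H_1(L)$; integrating $\ev_0^*\x$ along that orbit therefore contributes the factor $\int_{\partial\beta}\x = \x(\partial\beta)$. Pulling this scalar out leaves a finite sum over $[u]\in \uM_{0,\beta}/S^1$ of terms depending only on $\b$, finishing the identity and hence the lemma.

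The hard part will be executing this in the Kuranishi/multisection model of \cite{fukaya2010cyclic} that underlies \cref{eq:mdefinition}. Concretely, the abstract perturbation datum $\mathcal P$ must be chosen compatibly both with the forgetful map $\uM_{k,\beta}\to \uM_{0,\beta}$ and with the $S^1$-action on $\uM_{0,\beta}$, so that the fiber integration over the $k$-simplex, the $S^1$-integration along the output orbit, and the averaging against the weights $\rho_\aa$ all commute. These compatibilities are of the same flavor as the ones used to establish the ordinary divisor axiom \cref{eq:divisor} in \cite{fukaya2010cyclic}, which is why the statement is advertised as a minor generalization.
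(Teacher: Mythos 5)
Your approach is correct in spirit, coincides with the paper's for the hardest case, but diverges for $k>0$ and leaves one case untouched. The paper splits into three cases. For $\beta=0$ it verifies the vanishing directly from the DGA structure: $\langle d\b,\x\rangle=\pm\langle\b,d\x\rangle=0$ since $\x$ is closed, $\langle\b\wedge\b,\x\rangle=0$ since the pairing degrees force $\deg\b=1$ and hence $\b\wedge\b=0$, and $m^{\geq 3}_0=0$. Your geometric argument lives entirely on moduli spaces of $J$-holomorphic disks, so it says nothing about $\beta=0$; you would need to add this short check. For $\beta\neq 0,k>0$, the paper uses exactly the route you dismiss: cyclic symmetry \cref{eq:cyclic} rotates $\x$ through every input slot to write $\langle m^k_\beta(\b^{\otimes k}),\x\rangle$ as an average of $\langle m^k_\beta(\b^{\otimes i}\tensor\x\tensor\b^{\otimes k-1-i}),\b\rangle$ over $i$, then the divisor axiom \cref{eq:divisor} collapses that average to $\frac{1}{k}\langle\x(\partial\beta)m^{k-1}_\beta(\b^{\otimes k-1}),\b\rangle=0$. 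Your worry about a parity-dependent sign is reasonable on its face but dissolves once both the cyclic sign and the divisor sign are read in the shifted-degree convention that the paper's cyclic axiom explicitly uses ($\deg'\x=\deg'\b=0$, so all signs collapse to $+1$ and the cyclic sum matches the divisor sum term by term). Only for $\beta\neq 0, k=0$ does the paper do something new, and there its argument is precisely yours: factor the integral through the forgetful $S^1$-bundle $\mathfrak s_{\tilde\aa,\ii,\jj}^{-1}(0)\to\mathfrak s_{\aa,\ii,\jj}^{-1}(0)$ (following \cite[Lemma~13.1]{fukaya2010cyclic}) to the moduli of unmarked disks, and use $(\pi_{\beta,0})_*\ev_0^*\x=\x(\partial\beta)=0$.

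What your proposal buys is uniformity --- a single geometric identity $\langle m^k_\beta(\b^{\otimes k}),\x\rangle=\x(\partial\beta)\cdot C_{k,\beta}(\b)$ for all $k\geq 0$ --- at the cost of re-establishing the Kuranishi/multisection compatibility of the iterated forgetful map $\uM_{k,\beta}\to\uM_{0,\beta}\to\uM_{-1,\beta}$ in one go. The paper instead borrows that compatibility one step at a time from the already-proven divisor axiom, and only builds the new geometric input at the $k=0$ level. Since you flag the Kuranishi work as ``the hard part,'' you should be aware that the paper's route deliberately minimizes it: everything except the $k=0$ step is algebraic consequences of \cref{eq:cyclic,eq:divisor}.
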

 \begin{proof}
  We break into cases. 
  When $\beta\neq 0, k>0$, this follows from the cyclic and divisor axiom, as 
  \begin{align*}
  \langle m^k_\beta( \b\tensor \cdots \tensor \b), \x\rangle =& \frac{1}{k}\sum_{i=1}^k (-1)^{i}
  \langle m^k_\beta (\b^{\tensor i} \tensor \x \tensor \b^{\tensor {k-i-1}}), \b \rangle \\
  =& \frac{1}{k}\langle \x(\partial \beta) m^{k-1}(\b, \ldots, \b), \b\rangle =0. 
  \end{align*}
  When $\beta=0$ and $k\geq 1$
  \begin{align*}
  \langle d(\b), \x \rangle =\langle \b, d(\x)\rangle =0 && \langle \b\wedge \b , \x\rangle=0 && \langle m^k_0(\b^{\tensor k}), \x\rangle =0.
  \end{align*}
  This leaves only the case $\beta\neq 0, k=0$. 
  For this case we use a minor modification of \cref{eq:divisor} following the argument given in \cite[Lemma 13.1]{fukaya2010cyclic}.
  Let $\underline \pi_{\beta, 0}: \mathcal M_{\beta,  0}\to \mathcal M_{\beta,  -1}$ be the forgetful map to the moduli space of unmarked disks. As in the discussion following \cite[Lemma 13.1]{fukaya2010cyclic}, this gives us a $S^1$ bundle  $\pi_{\beta, 0}: \mathfrak s_{\tilde \aa, \ii, \jj}^{-1}(0)\to \mathfrak s_{\aa, \ii, \jj}^{-1}(0)$. We then compute:
   \begin{align*}
     \langle m^k_\beta,   \x \rangle 
     =&\int_L \x \wedge \sum_{\tilde \aa\in \mathfrak U_{\beta,  0}}  \sum_{\ii\in I_\aa} \sum_{\jj=1}^{\ell_\ii}\left. (\ev_0)_* \frac{(-1)^{(j-1)\deg \x} \chi_{\tilde \aa}\chi_{\ii, \tilde \aa} }{\ell_\ii}    \wedge \rho_{\tilde \aa} \right|_{\mathfrak{s}_{\tilde \aa, \ii, \jj}^{-1}(0)}\\
     =&   \sum_{\tilde \aa\in \mathfrak U_{\beta,  0}}  \sum_{\ii\in I_\aa} \sum_{\jj=1}^{\ell_\ii} \int_{\mathfrak{s}_{\tilde \aa, \ii, \jj}^{-1}(0)}\left. (\ev_0)^*\x \wedge \frac{(-1)^{(j-1)\deg \x} \chi_{\tilde \aa}\chi_{\ii, \tilde \aa} }{\ell_\ii}    \wedge \rho_{\tilde \aa} \right|_{\mathfrak{s}_{\tilde \aa, \ii, \jj}^{-1}(0)}
     \intertext{Factoring integration over $\mathfrak{s}_{\tilde \aa, \ii, \jj}^{-1}(0)$ through $\pi_{\beta, 0}:\mathfrak{s}_{\tilde \aa, \ii, \jj}^{-1}(0)\to \mathfrak{s}_{\aa, \ii, \jj}^{-1}(0)$, and using the compatibilities $\chi_{\tilde a}=\pi_{\beta, 0}^* \chi_{ a}, \rho_{\tilde a}=\pi_{\beta, 0}^* \rho_a$}
     =&  \sum_{\tilde \aa\in \mathfrak U_{\beta,  0}}  \sum_{\ii\in I_\aa} \sum_{\jj=1}^{\ell_\ii} \int_{\mathfrak{s}_{\aa, \ii, \jj}^{-1}(0)}  \left.(\pi_{\beta, 0})_*((\ev_0)^*\x) \wedge \frac{(-1)^{(j-1)\deg \x} \chi_{\tilde \aa}\chi_{\ii, \tilde \aa} }{\ell_\ii}    \wedge \rho_{\tilde \aa} \right|_{\mathfrak{s}_{\tilde \aa, \ii, \jj}^{-1}(0)}
   \end{align*} 
   This vanishes as $ (\pi_{\beta, 0})_*((\ev_0)^*\x)=\x(\beta)=0$. 
 \end{proof}
\subsection{An unobstructedness criterion}
\label{subsec:unobstructednessCriterion}
We now introduce an unobstructedness criterion utilizing \cref{lem:zeropairing}.
First, let us recall some terminology for filtered $A_\infty$ algebras. We say that a filtered $A_\infty$ algebra $\mathcal A$ is tautologically unobstructed if $m^0=0$. Given $\b\in \mathcal A$, we define the $\b$-deformed $A_\infty$ structure to be 
\[m^k_{\beta, \b}:= \sum_j\sum_{i_0, \ldots, i_k=j} m^{k+j}(\b^{\tensor i_0}\tensor \id \tensor \b^{\tensor i_2}\tensor \cdots \tensor \id \tensor \b^{\tensor i_k}),\] and we write $m^k_\b:=\sum_{\beta}m^k_{\beta, \b}$. Given $E\in \RR_{>0}$, we say that $\b\in \CF(L)$ is an $E$- bounding cochain if $m^0_{\b}\equiv 0\mod T^E$; equivalently that $\b$ solves the Maurer-Cartan equation
\begin{equation}
  \label{eq:maurerCartan}
  0\equiv \sum_k m^k(\b^{\tensor k})\mod T^E.
\end{equation}
We say that $\mathcal A$ is $E$-unobstructed if it has an $E$-bounding cochain. Call $\b$ is a bounding cochain if \cref{eq:maurerCartan} holds for all $E$ and   $\mathcal A$  unobstructed if it has a bounding cochain. Finally, we will say that a Lagrangian submanifold $L$ is $E$-unobstructed or unobstructed if $\CF(L, J)$ is $E$-unobstructed or unobstructed. 

To construct a bounding cochain it is useful to have some constraints on the homology classes of $J$-holomorphic disks.
\begin{definition}
  \label{def:loopGrading}
  Let $L$ be a Lagrangian submanifold. Let $J$ be an almost complex structure. The loop grading group $\Grd(L, J)$ be the subgroup of $H_1(L)$ generated by $J$-pseudoholomorphic disks so that 
  \[ \{ [\partial u]\st u:(D, \partial D^2)\to (X, L) \text{ is $J$ holomorphic} \} \subset \Grd(L, J ).\]
\end{definition}
The smaller we can make the loop grading group, the better chance we have of constructing a bounding cochain.
\begin{example}
  \label{exm:trivialInclusion}
  Given any Lagrangian submanifold, we at a minimum obtain  $\Grd(L, J)\subset \ker(H_1(L)\to H_1(X))$.   
  For example, when we let $V=\{q\}\in \RR^n$ a point, then $X^A\cong L_V\times Q$ and so $\Grd(L_V, J)=0$.
  In fact, in this case, we have something much stronger, that $H_2(X, L)=0$, so there are no $J$-holomorphic disks and $L_V$ is tautologically unobstructed. Thus, the loop grading group provides an intermediate between this stronger condition and what we usually encounter in practice.
\end{example}
Some structures in symplectic geometry only appear under a limiting set of almost complex structures.  For this reason, it is useful to have a framework for a loop grading group of a limiting set of almost complex structures.
\begin{definition}
  \label{def:filteredLoopGrading}
  Let $L$ be a Lagrangian submanifold. A limit loop grading group associated with a family of almost complex structures $\{J_E\}_{E\in \RR_{>0}}$ is the subgroup $\Grd_\infty(L, \{J_E\})\subset H_1(X, L)$ generated by the boundary classes of $J_E$ disks with energy less than $E$ so that:
  \[\{ [\partial u]\st u:(D^2, \partial D^2)\to (X, L) \text{ is $J_{E}$ holomorphic, $\omega(\partial u)\leq E$}\}\subset \Grd_\infty(L, \{J_E\}).\]
\end{definition}

\begin{proof}[Proof of \cref{thm:unobstructedness}]
  Suppose that we have a sequence of almost complex structure $\{J_{E}\}_{E\in \RR}$ so that $\Grd_{\infty}(L, \{J_E\})=0$. Let us denote the $A_\infty$ structure on $\CF(L, J_E)$ by $m^{k, E}_\beta$.
  By \cref{eq:emptyVanishing} we may choose the perturbations so that $m^{k, E}_\beta=0$ whenever $\omega(\beta)<E$ and $[\partial \beta]\neq 0$.
  Following the basic strategy presented in \cite{fukaya2010lagrangian}, we will construct a sequence of $E$-bounding cochains $\b_E\in CF^1(L, J_0)$ with the property that whenever $E'>E$ we have $b_E\cong b_{E'} \mod T^E$.
  The limit $\b:= \lim_{E\to \infty} \b_E$ will then be a bounding cochain for $\CF(L, J_0)$. We start with $\b_0=0$.

  Suppose that we've constructed $\b_E\in \CF(L, J_0)$. Because the Fukaya algebra $\CF(L, J_0)$ is gapped, we can take $E'$ to be the next larger energy level so that $\omega(\beta)>E$ implies $\omega(\beta)\geq E'$. 
  Let $\Phi_{0, E'}: \CF(L, J_0)\to \CF(L, J_{E'})$ be the filtered $A_\infty$ isomorphism arising from the pseudoisotopy. The pushforward $\b'_E:= (\Phi_{0, E'})_*\b_E = \sum_k \Phi_{0, E'}(\b_E^{\tensor k})$ is an $E$-bounding cochain on $\CF(L, J_{E'})$. The lowest-order term of the filtered $A_\infty$ relations tell us \[
    d \circ m^{0,E'}_{\b_{E'}}\equiv 0 \mod T^{E'}.\]  
  We show this is exact by proving for all $\x$ the pairing $ \langle m^{0,E'}_{\b'_E}, \x\rangle\equiv 0 \mod T^{E'}$ vanishes:
  \begin{align*}
    \langle m^{0,E'}_{\b'_E}, \x\rangle =& \sum_{\beta\in H_2(X, L) \st \omega(\beta) \leq E'}T^{\omega(\beta)} \langle m^0_{\beta, \b'_E}, \x\rangle + \text{Terms of order greater than $E'$}\\
    \equiv& \sum_{\beta\in H_2(X, L) \st \omega(\beta) \leq E'}T^{\omega(\beta)} \sum_k\langle m^{k, E'}_\beta((\b'_E)^{\tensor k}), \x\rangle \mod T^{E'}
    \intertext{ By our assumption that $\Grd_\infty(L, \{J_E\})=0$, for every $\beta$ with $\omega(\beta)\leq E'$ and $\x\in H^1(L)$ we have $\x(\partial \beta)=0$. We  apply \cref{lem:zeropairing} to obtain }
    \equiv& 0 \mod T^{E'} \text{ for all } \x\in H^1(L).
  \end{align*}
  Thus $[m^{0,E'}_{\b'_E}]\equiv 0\mod T^{E'}$. Furthermore, $m^{0,E'}_{\b'_E}=0\mod T^E$, so we can find $\y_{E'}$ with the property that $\y_{E'}\equiv 0 \mod T^E$ and $d\y_{E'} = m^0_{\b'_E}\mod T^{E'}$.

  It follows that $\b'_{E}+\y_{E'}$ is an $E'$-bounding cochain on $\CF(L, J_{E'})$. 
  Now let $\Phi_{E', 0}$ be the filtered \text{inverse} to $\Phi_{0, E'}$. We define $\b_{E'}:= (\Phi_{E', 0})_* (\b'_{E}+\y_{E'})$. This is an $E'$-bounding cochain. Since $\Phi_{E', 0}$ is $\Lambda_0$-filtered, 
  \[(\Phi_{E',0})_*(\b'_E+\y_{E'})= \b_E +(\Phi_{E',0})_*\y_{E'}\equiv \b_E\mod T^E.\] 
\end{proof}

\section{Unobstructedness from rigidity}
\label{sec:realizability}
We now apply \cref{thm:unobstructedness} to the setting where $L_V\subset X^A$ is a lift of a tropical subvariety $V\subset Q$. This involves constructing a candidate sequence of almost complex structures so that the limit loop grading group $\Grd_\infty(L, \{J_E\})$ vanishes. 
In \cref{subsec:loopGradingTropical} we use tropicalization of pseudoholomorphic disks to constrain $\Grd_\infty(L_V, \{J_E\})$. 
We subsequently show in \cref{subsec:nondegenerate} that this limiting grading group vanishes whenever the underlying tropical subvariety is rigid. These two results hold in any dimension.
We can then deduce in dimension three that $L_V$ is unobstructed.
\subsection{Constraints on the loop grading of disks}
\label{subsec:loopGradingTropical}
Let $Q$ be an affine manifold with affine metric and let $L_V$ be a Lagrangian lift of a tropical subvariety $V\subset Q$.  
For $\eps$ sufficiently small we have a homotopy equivalence between $B_\eps(|V|)$ --- a small thickening of $|V|$  --- and $|V|$ itself. Composing this with the map $L_V\hookrightarrow \syza^{-1}(B_\eps(|V|))$ gives us a map on first cohomology that we will denote $(\pi_V)_*:H_\bullet(L_V)\to H_\bullet(|V|)$. 
\begin{lemma}\label{lem:trivialRestrictiontoVertex}
  Let $Q, V, L_V$ as above. There exists a sequence of almost complex structures $J_E$ so that 
  \[\Grd_\infty(L_V, \{J_E\})\subset \ker((\pi_V)_*).\]
\end{lemma}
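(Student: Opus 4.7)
The plan is to build the family $\{J_E\}$ by combining a fixed background almost complex structure on $X^A$ with a rescaling of the base directions of the fibration $\syza: X^A \to Q$ by an amount that grows with $E$. Concretely, I would start with the standard SYZ-type almost complex structure on $X^A = T^*Q/M_Q$ near the zero section and modify it by replacing the affine metric on $Q$ with $E \cdot g_Q$ (or, dually, scaling the fiber directions by $1/E$). Call the resulting $\omega$-compatible almost complex structure $J_E$. The heuristic is that for $J_E$, a $J_E$-holomorphic disk $u$ of energy at most $E$ has to have very narrow base projection $\syza \circ u: D^2 \to Q$, because symplectic area in the rescaled picture costs proportionally in the base extent; this is the tropicalization mechanism credited to Parker in the acknowledgements.

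The main step is a tropicalization/Gromov-compactness argument: for any $\eps>0$, if $E$ is sufficiently large and $u:(D^2,\partial D^2)\to (X^A,L_V)$ is $J_E$-holomorphic with $\omega([u])\leq E$, then $\syza(u(D^2)) \subset B_\eps(|V|)$. The boundary constraint $u(\partial D^2)\subset L_V$ already forces $\syza(u(\partial D^2))$ into a neighborhood of $|V|$ by the definition of $L_V$ as a lift; the tropicalization statement extends this from the boundary to the interior. I would make this precise by arguing that, in the $E\to\infty$ limit, after rescaling, sequences of $J_E$-holomorphic disks of bounded energy converge (in a suitable Gromov–Hausdorff sense) to tropical curves in $Q$ with boundary on $|V|$, and any such tropical limit must be supported on $|V|$.

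Granted this containment, one concludes as follows. Choose $\eps>0$ small enough that $B_\eps(|V|)$ deformation retracts onto $|V|$, and let $E$ be large enough that every $J_E$-holomorphic disk $u$ with $\omega([u])\leq E$ satisfies $\syza(u)\subset B_\eps(|V|)$. Composing $\syza\circ u$ with the retraction gives a continuous map $\bar u: D^2 \to |V|$ whose boundary is homotopic to $\pi_V \circ (u|_{\partial D^2})$. Thus $(\pi_V)_*[\partial u]=[\partial \bar u]=0$ in $H_1(|V|)$, which is exactly the containment $\Grd_\infty(L_V,\{J_E\})\subset \ker((\pi_V)_*)$.

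The hard part will be the tropicalization/compactness step: one needs to verify that the rescaling of $J_E$ really does force the base image to shrink onto $|V|$, with explicit control of the neighborhood size in terms of $E$, and to handle the boundary behavior carefully so that disks cannot escape $|V|$ through $L_V$. The Lagrangian $L_V$ is only defined up to an $\eps$-thickening of $|V|$ in $Q$, so one must match the scales: the tropical limit width $\eps_E$ coming from the Gromov limit must be smaller than the thickening scale used to define the retraction. Beyond this estimate, the argument is dimension-independent, which is consistent with the lemma's statement and why it feeds into \cref{thm:realizable} only after the dimension-three input \cref{thm:unobstructedness}.
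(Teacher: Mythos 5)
Your construction of the family $J_E$ matches the paper's: scale the base/fiber directions of the SYZ fibration by an amount growing with $E$ so that high-energy disks become increasingly "tropical." The divergence comes in how the consequence is extracted. You propose to prove, via Gromov compactness and tropicalization, that every $J_E$-holomorphic disk of energy $\leq E$ has its entire projection $\syza\circ u$ contained in $B_\eps(|V|)$, after which the retraction onto $|V|$ gives $(\pi_V)_*[\partial u]=0$. You correctly flag this containment as "the hard part," and indeed it is left unproven; establishing Gromov-type convergence of $J_E$-disks to tropical curves as $E\to\infty$ is a substantial undertaking (in the spirit of Parker's exploded manifolds or Mikhalkin's correspondence) and is far more than the lemma requires.

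The paper avoids this entirely with a direct area estimate: for the rescaled $J_E=I_A$, a pointwise Cauchy--Schwarz computation shows $\int_{D^2}u^*\omega \geq \frac{A}{n}\,\Area_{Q,g}(\syza\circ u)$. One then only needs the observation that representatives of any \emph{nontrivial} class in $H_2(Q, B_\eps(|V|))$ have $g$-area bounded below by some $C>0$. Setting $A = nE/C$, any $J_E$-holomorphic disk with $\omega(\beta)\leq E$ must satisfy $(\syza)_*[\beta]=0 \in H_2(Q,B_\eps(|V|))$, and the long exact sequence of the pair then gives $(\syza)_*[\partial\beta]=0$ in $H_1(B_\eps(|V|))\cong H_1(|V|)$. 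No limit, no compactness, no control of the actual image of the disk --- just the vanishing of the relative homology class, which is weaker than your containment statement but exactly sufficient. Your stronger containment would, if established, upgrade the conclusion from null-homologous to contractible boundaries (as the paper notes in the remark after the lemma), but as written your proposal has a genuine gap at its central step, whereas the published proof closes it with elementary means.
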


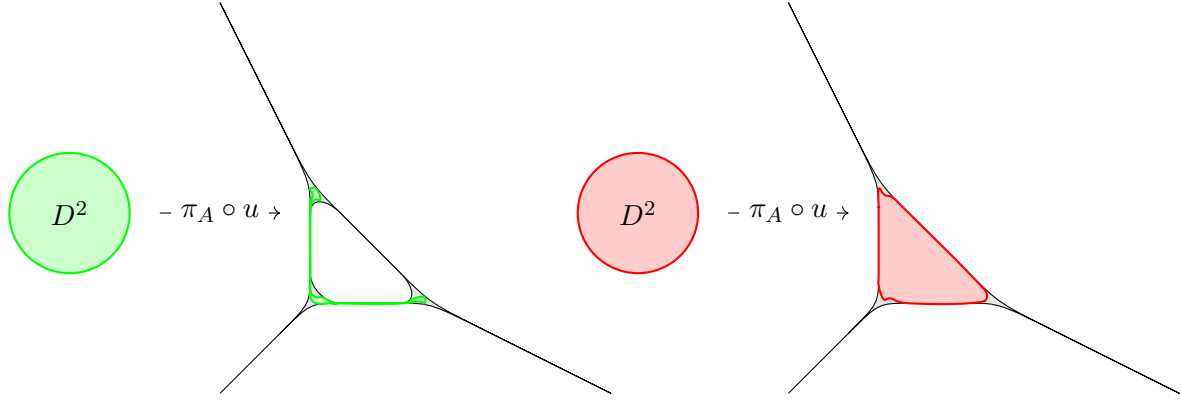
\begin{figure}
\centering
\begin{subfigure}{.45\linewidth}
  \begin{tikzpicture}[scale=.8]
\begin{scope}[]
\draw[fill=gray!20] (-1.5,-0.5) .. controls (-1,0) and (-1,0) .. (-0.5,0.5) .. controls (0,1) and (0,1) .. (0.5,1) .. controls (1,1) and (1,1) .. (1.5,1) .. controls (2,1) and (2,1) .. (3,0.5) .. controls (4,0) and (4,0) .. (5,-0.5) .. controls (4,0) and (4,0) .. (3,0.5) .. controls (2,1) and (2,1) .. (1.5,1.5) .. controls (1,2) and (1,2) .. (0.5,2.5) .. controls (0,3) and (0,3) .. (-0.5,4) .. controls (-1,5) and (-1,5) .. (-1.5,6) .. controls (-1,5) and (-1,5) .. (-0.5,4) .. controls (0,3) and (0,3) .. (0,2.5) .. controls (0,2) and (0,2) .. (0,1.5) .. controls (0,1) and (0,1) .. (-0.5,0.5);
\draw[fill=white] (0,1.5) .. controls (0,1.25) and (0.25,1) .. (0.5,1) node (v1) {} .. controls (1,1) and (1,1) .. (1.5,1) node (v2) {} .. controls (1.75,1) and (1.75,1.25) .. (1.5,1.5) .. controls (1,2) and (1,2) .. (0.5,2.5) .. controls (0.25,2.75) and (0,2.75) .. (0,2.5) .. controls (0,2) and (0,2) .. (0,1.5);
\draw[fill=green!20, draw=green, thick]  (-4,2.5) ellipse (1 and 1);
\draw[fill=green!20, draw=green, thick]  plot[smooth cycle, tension=.7] coordinates {(0,2.9) (0.1,2.9) (0.17,2.73) (0.06,2.71) (0,2.5) (0,1.5) (0,1.2) (0.1,1.1) (0.2,1.1) (0.4,1) (v1) (v2) (1.7,1.07) (1.8,1.1) (1.9,1.1) (1.9,1) (1.8,1.03) (1.5,1) (0.5,1) (0.4,1) (0.1,1) (0,1.1) (0,1.2) (0,1.5) (0,2.5) (0,2.67) (0.04,2.8)};

\end{scope}

\draw[->] (-2.5,2.5) -- node[fill=white]{$\syza\circ u$} (-0.5,2.5);
\node at (-4,2.5) {$D^2$};
\end{tikzpicture}   \caption{$J_E$ pseudoholomorphic disks are allowed to have $\syza$ projection that looks like this.}
\end{subfigure}
\begin{subfigure}{.45\linewidth}
  \begin{tikzpicture}[scale=.8]
\begin{scope}[]
\draw[fill=gray!20] (-1.5,-0.5) .. controls (-1,0) and (-1,0) .. (-0.5,0.5) .. controls (0,1) and (0,1) .. (0.5,1) .. controls (1,1) and (1,1) .. (1.5,1) .. controls (2,1) and (2,1) .. (3,0.5) .. controls (4,0) and (4,0) .. (5,-0.5) .. controls (4,0) and (4,0) .. (3,0.5) .. controls (2,1) and (2,1) .. (1.5,1.5) .. controls (1,2) and (1,2) .. (0.5,2.5) .. controls (0,3) and (0,3) .. (-0.5,4) .. controls (-1,5) and (-1,5) .. (-1.5,6) .. controls (-1,5) and (-1,5) .. (-0.5,4) .. controls (0,3) and (0,3) .. (0,2.5) .. controls (0,2) and (0,2) .. (0,1.5) .. controls (0,1) and (0,1) .. (-0.5,0.5);
\draw[fill=white] (0,1.5) .. controls (0,1.25) and (0.25,1) .. (0.5,1) node (v1) {} .. controls (1,1) and (1,1) .. (1.5,1) node (v2) {} .. controls (1.75,1) and (1.75,1.25) .. (1.5,1.5) node (v3) {} .. controls (1,2) and (1,2) .. (0.5,2.5) .. controls (0.25,2.75) and (0,2.75) .. (0,2.5) .. controls (0,2) and (0,2) .. (0,1.5);
\draw[fill=red!20, draw=red, thick]  (-4,2.5) ellipse (1 and 1);

\end{scope}

\draw[->] (-2.5,2.5) -- node[fill=white]{$\syza\circ u$} (-0.5,2.5);
\node at (-4,2.5) {$D^2$};
\draw[fill=red!20, draw=red, thick, opacity=20]  plot[smooth cycle, tension=.7] coordinates {(0.1,2.8) (0,2.9) (0,2.6) (0,2.5) (0,1.5) (0,1.3) (0.08,1.07) (0.2,1.08) (0.5,1) (v2) (1.8,1.12) (v3) (0.5,2.5) (0.24,2.75)};
\end{tikzpicture}   \caption{$J_E$ pseudoholomorphic disks are not allowed to have $\syza$ projection that looks like this.}
\end{subfigure}
\caption{Pseudoholomorphic disks with boundary on a tropical Lagrangian submanifold must tropicalize in the large complex structure limit.}
\label{fig:diskTropicalization}
\end{figure}
\begin{proof}
  We prove that for any given energy $E$ there exists a choice of almost complex structure $J_E$ so that $\mathcal M_{\beta}(X, L, J_E)$ is empty whenever $(\pi_V)_*[\partial\beta]\neq 0\in H_1(V)$ and $\omega(\beta)<E$. We will prove this by using the tropicalization of pseudoholomorphic disks with boundary on $L_V$ as the complex structure approaches the large fiber limit (a heuristic is drawn in \cref{fig:diskTropicalization}).
 We use the metric to determine a splitting 
 \[TX^A\cong\syza^*TQ\oplus \syza^*T^*Q,\]
 as well as an isomorphism $\Phi: \syza^*T^*Q\to \syza^*TQ$. For $A\in \RR_{>0}$, we define a compatible almost complex structure by 
 \[I_A:=\begin{pmatrix} 0 & A\cdot \Phi \\ - (A\cdot \Phi)^{-1} & 0 \end{pmatrix}: \syza^*TQ\oplus \syza^*T^*Q\to \syza^*TQ\oplus \syza^*T^*Q\]
 Using the metric to determine locally flat orthonormal coordinates $(q_i, p_i)$ on $TX_A$ we obtain
  \begin{align*}
    I_A\partial_{q_i} =- A^{-1}\cdot \partial_{\p_i} && I_A \partial_{\p_i}= A \cdot \partial_{q_i}.
  \end{align*}
   We now show that the symplectic area of an $I_A$-pseudoholomorphic disk is lower bounded by a multiple of the $g$-area of its projection to $Q$:
  \begin{align*}
    \int_{D^2}u^*\omega =& \iint_{s, t} \sum_{i=1}^n dq_i\wedge d\p_i \left(\partial_s u, \partial_t u \right)dsdt = \iint_{s, t}\sum_i\left( \partial_s u_{q_i}\cdot \partial_t u_{\p_i}-\partial_t u_{q_i}\cdot \partial_s u_{\p_i}\right)dsdt\\
    =&\iint_{s,t} A \sum_{i}\left( (\partial_s u_{q_i})^2+(\partial_t u_{q_i})^2\right) dsdt= \frac{A}{n} \iint_{s,t}\sum_{i, j} (\partial_s u_{q_i})^2 + (\partial_t u_{q_j})^2 ds dt\\
    \geq& \frac{A}{n}\iint_{s, t} (\partial_s u_q )\cdot (\partial_t u_q)ds dt = \frac{A}{n}\cdot \Area_{Q,g}(\pi\circ u).
  \end{align*}
  Observe that there exists $C>0$ so that  $\Area_{Q,g(\underline u)}>C$, whenever $\underline u$ represents a nontrivial class in $H_2(Q, B_\eps(|V|))$.

  We are now ready to prove the lemma. Take our sequence of almost complex structures to be $J_E=I_{nE/C}$. For this choice of almost complex structure, we have for any class $\beta$ supporting a holomorphic disk with $\omega(\beta)\leq E$  :
  \[E\geq \omega(\beta) \geq \frac{A}{n}\Area_{Q, g}(\syza\circ u)> E \text{ if $(\syza)_*[\beta]\neq 0 \in H_2(Q,B_\eps(|V|))$}\]
  So we conclude that  $(\syza)_*[\beta]=0$ and in particular $(\syza)_*[\partial \beta]=0\in H_1(B_\eps(|V|))$.
\end{proof}
\begin{remark}
\Cref{lem:trivialRestrictiontoVertex} can be upgraded to prove that for suitable $J$ the boundary of pseudoholomorphic disks must be contractible in $|V|$, not simply null-homologous.
\end{remark}
\begin{remark}  
  In \cite[Figure 3]{hicks2021observations}, it was noted that when $Q$ has an affine structure with discriminant locus and boundary (these examples show up, for instance, for $X^A=\CP^2$), there \emph{cannot} exist a tropicalizing almost complex structure.
\end{remark}

\subsection{First cohomology of Lagrangian lifts of tropical subvarieties}
\label{subsec:nondegenerate}
Let $\dim(V)=k$ and $\dim(X)=n$. 
To check that $\Grd_\infty(L_V, \{J_E\})=0$ by looking at $V$, we need a good description of the cohomology of $L_V$.  
The broadest class where we have a clear description of the cohomology of $L_V$ is when $V$ admits a pair-of-pants decomposition.
While we will only need to compute $H^1(L_V)$ when $V$ is a curve, work in the level of generality where $V$ is of arbitrary dimension and admits a pair of pants decomposition as it may be of use in future work.

\subsubsection{Local model: the pair of pants}
The $j-1$-dimensional tropical pair of pants $\Gamma_j\subset \RR^{j}$ is the tropical hypersurface corresponding to the tropical polynomial $1\oplus q_1\oplus \cdots \oplus q_{j}$. This is the tropicalization of a hyperplane inside of $\mathbb{P}^{j}$. We say that $V$ admits a pair-of-pants decomposition if every point has a neighborhood locally modeled on the stabilization of a pair of pants. More precisely: for any stratum $\sigma$ of $V$, we write $V_\sigma:=\{\tau \st \tau>\sigma\}$ for the star of $\sigma$. 
We say that $V$ has a pair-of-pants decomposition if for all $\sigma$ there exists a neighborhood $U_\sigma\subset Q$ so that $|V|\cap U_\sigma = |V_\sigma|\cap U_\sigma$ and an affine map $\phi: U_\sigma\to \RR^{n}$ diffeomorphic onto its image so that $\phi(V_\sigma)= (|\Gamma_j|\times \RR^{k-j}\times \emptyset)\cap {\phi(U_\sigma)}\subset (\RR^{j}\times \RR^{k-j}\times \RR^{n-k})$. It suffices to check these conditions at the stars of the vertices as opposed to all strata.

In \cite{hicks2020tropical,matessi2018lagrangian}, a Lagrangian lift $L_{\Gamma_j}\subset T^*\RR^{j}/T^*_\ZZ\RR^{j}$ of the tropical pair of pants $\Gamma_j\subset \RR^{j}$ was constructed (independently, the lift was constructed for $L_{\Gamma_2}$ by \cite{sheridan2020lagrangian,mak2020tropically,mikhalkin2018examples}).
In \cite{hicks2020tropical} the Lagrangian $L_{\Gamma_j}$ is presented topologically as two copies of $D^j$ identified at $j$ boundary points, from which it follows that 
\[  
  H^i(L_{\Gamma_j})=\left\{\begin{array}{cc} \RR & i=0\\ \RR^{j} & i=1 \\0 & \text{otherwise.}\end{array}\right.
\]
It will be useful to give a coordinate-free presentation of the cohomology of the tropical pair of pants. Returning to our notation from before: let $Q=\RR^{j}$; $N_Q$ be the sheaf of integral tangent fields;  $M_Q\subset T^*Q$ be the dual bundle of integral covector fields; and let $v\in \Gamma_j\subset Q$ be the vertex of the pair of pants. There is a projection from the total space $T^*Q/M_Q$ to the real $j$ dimensional torus $M_v\tensor \RR/M_v$, whose cohomology is canonically identified as the exterior algebra generated on the lattice $N_v$.
The composition $i: L_{\Gamma_j}\to X^A_{\RR^j}\simeq M_v\tensor \RR/M$ induces a map $i^*: N_v\to H^\bullet(L_{\Gamma_j})$. This map is surjective, an isomorphism in degrees 0 and 1, and zero in higher degrees; in particular: 
\begin{align}
  \label{eq:popCohomology}
  H^0(L_{\Gamma_j})\cong \RR, && H^1(L_{\Gamma_j})\cong H^0(V, i^*_v N_Q).
\end{align}
We now consider the cohomology of the local model for a Lagrangian lift of a tropical subvariety with a pair-of-pants decomposition.
The restriction to $L_\sigma := L_V\cap \syza^{-1}(U_\sigma)$ is based on a local model:
\[L_\sigma = \left(L_{\Gamma_j}\times T^{j-k}\times \emptyset \right)\cap \phi_\sigma (\tilde U_\sigma)\]
which sits inside $\RR^j\times \RR^{j-k}\times \times \RR^{n-k-j}$. 
By application of K\"unneth formula and \cref{eq:popCohomology}, we identify the first cohomology of the pieces with
\begin{align*}
  H^0(L_\sigma)\cong \RR &&  H^1(L_\sigma)\cong H^0(V_\sigma,\underline i^*_\sigma N_Q/N_\sigma).
\end{align*}
Furthermore observe that when $\sigma < \tau$ we have a containment $N_\sigma < N_\tau$, and thus a map 
\[H^0(V_\sigma, \underline i^*_\sigma N_Q/N_\sigma)\to H^0(V_\tau, \underline i^*_\tau  N_Q/N_\tau).\]
This agrees with the pullback map $H^1(L_\sigma)\to H^1(L_\tau)$.

\subsection{An efficient cover for computing cohomology}
Recall that we can associate to each stratum $\sigma$ of $V$ the star of the stratum, $V_\sigma$. 
We can similarly form an open set $\tilde U_\sigma:= B_\eps(|V_\sigma|) \setminus \left(\bigcup_{\tau\not \geq \sigma} \bar B_\eps(|\tau|)\right)$. This is a slightly interior open subset of $B_\eps(V_\sigma)$. These sets form an ``efficient'' cover in the following sense:
\begin{proposition}\label{prop:efficientComplex}
  There is an exact sequence of chain complexes
  \begin{equation}
    \label{eq:efficientComplex}
    \bigoplus_{\sigma \st \dim(\sigma)=k} C^\bullet(\tilde U_\sigma)\ot \bigoplus_{\sigma \st \dim(\sigma)=k-1} C^\bullet(\tilde U_\sigma)\ot \cdots \ot \bigoplus_{e\in V} C^\bullet(\tilde U_e)\ot \bigoplus_{v\in V} C^\bullet(\tilde U_v)\ot C^\bullet(B_\eps(V)).
  \end{equation}
\end{proposition}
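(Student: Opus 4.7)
The plan is to identify \cref{eq:efficientComplex} with the augmented \v{C}ech complex for the open cover $\mathcal U = \{\tilde U_v\}_{v \in V}$ of $B_\eps(|V|)$ by vertex stars, reindexed so that vertex subsets are labeled by the strata they span, and then to invoke the sheaf property of de Rham cochains to conclude exactness.

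First I would verify that $\mathcal U$ is an open cover: for $\eps$ small, every $x \in B_\eps(|V|)$ lies near some stratum $\tau$, and any vertex $v \leq \tau$ satisfies $\tau \in V_v$ while $x$ is bounded away from every $\tau' \not\geq v$, so $x \in \tilde U_v$. The crucial combinatorial step is then the identification of intersections with stars of strata. Unpacking the definition $\tilde U_v = B_\eps(|V_v|) \setminus \bigcup_{\tau \not\geq v} \bar B_\eps(|\tau|)$, the poset identities $\bigcap_{v \in S} V_v = V_\sigma$ and $\bigcup_{v \in S}\{\tau : \tau \not\geq v\} = \{\tau : \tau \not\geq \sigma\}$ (valid when $S$ is the vertex set of a stratum $\sigma$) yield $\bigcap_{v \in S}\tilde U_v = \tilde U_\sigma$; the intersection is empty otherwise. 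The pair-of-pants hypothesis ensures each vertex link is modeled on $\Gamma_j$, so every $p$-dimensional stratum is simplicial with exactly $p+1$ vertices, and consequently the degree-$p$ term $\bigoplus_{|S|=p+1} C^\bullet\bigl(\bigcap_{v \in S}\tilde U_v\bigr)$ of the \v{C}ech complex reindexes to $\bigoplus_{\dim \sigma = p} C^\bullet(\tilde U_\sigma)$, with the \v{C}ech differential becoming the signed restriction across codimension-one face relations.

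Given this identification, exactness is standard: de Rham cochains form a sheaf on the paracompact manifold $B_\eps(|V|)$, so for any open cover the augmented \v{C}ech complex is exact in each cohomological degree, with a contracting homotopy provided by any partition of unity subordinate to $\mathcal U$. I expect the main obstacle to be the combinatorial bookkeeping in the middle step: carefully establishing the intersection identities using the local pair-of-pants coordinates (so that $\bigcap_{v \in S}B_\eps(|V_v|) = B_\eps(|V_\sigma|)$ and not merely a containment), and matching the sign conventions between the \v{C}ech differential and the face-poset differential on strata.
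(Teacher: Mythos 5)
Your argument via the augmented \v{C}ech complex and a subordinate partition of unity is sound in the simplicial case, and this is exactly how the paper treats that case. The gap is the claim that the pair-of-pants hypothesis forces $V$ to be simplicial (``every $p$-dimensional stratum is simplicial with exactly $p+1$ vertices''); that is false. The pair-of-pants condition is a purely local constraint on the vertex stars $V_v$ and places no restriction on the global combinatorics of the face poset. Already for curves a trivalent graph can have several edges sharing the same pair of endpoints: the curve in \cref{ex:rigid1} has two vertices joined by three edges (a theta graph), so the assignment sending a $1$-stratum to its $2$-element vertex set is three-to-one and your reindexing $\bigoplus_{|S|=p+1} C^\bullet\bigl(\bigcap_{v\in S}\tilde U_v\bigr)\to\bigoplus_{\dim\sigma=p}C^\bullet(\tilde U_\sigma)$ is not a bijection. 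In higher dimensions it is worse: a $2$-stratum of a polyhedral complex whose vertex stars are all pair-of-pants local models need not be a triangle and can have any number of vertices.

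The paper's proof has a second step you are missing precisely to address this. When $V$ is merely polyhedral, one chooses a polyhedral (in particular, simplicial) subdivision $\tilde V$; the \v{C}ech--de Rham argument that you give applies verbatim to $\tilde V$, and then the standard comparison showing that subdivision induces a homotopy equivalence of cochain complexes (adapted to CW cochains with polyhedral cells) transports exactness back to \cref{eq:efficientComplex} for the original $V$. Adding that subdivision-invariance step would repair your proof and bring it in line with the paper's.
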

\begin{proof}
  First, observe in the setting where our polyhedral complex $V$ is simplicial, $\tilde U_\sigma$ agrees with $\bigcap_{v< \sigma}\tilde U_v$, and so in this case \cref{eq:efficientComplex} agrees with the \v{C}ech de-Rham exact sequence associated to the cover $\{L_V\}_{v\in V}$. For general $V$, we can always find $\tilde V$ a polyhedral subdivision. The standard argument for proving that subdivision is a homotopy equivalence of simplicial cochain complexes  (here adapted to CW cohomology with polyhedral cells) proves exactness for \cref{eq:efficientComplex}.
\end{proof}

By the same argument as \cref{prop:efficientComplex}, the cohomology of $L_V$ fits in the exact sequence of cochain complexes:
\[\bigoplus_{\sigma \st \dim(\sigma)=k} C^\bullet(L_\sigma)\ot \bigoplus_{\sigma \st \dim(\sigma)=k-1} C^\bullet(\tilde L_\sigma)\ot \cdots \ot \bigoplus_{e\in V} C^\bullet(L_e)\ot \bigoplus_{v\in V} C^\bullet(L_v)\ot C^\bullet(L_V).\]

\subsection{Rigidity and Unobstructedness}
We are now equipped to compare the first cohomology of $L_V$ with $\syza^{-1}(\tilde U_V)$.
\begin{lemma}
  \label{lem:rigidGrading}
  Suppose that $V\subset Q$ is rigid. Then $H^1(\syza^{-1}(\tilde U_V))\to H^1(L_V)$ is surjective.
\end{lemma}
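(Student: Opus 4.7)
The plan is to compare the Mayer--Vietoris long exact sequences provided by \cref{prop:efficientComplex}: one for the cover $\{\tilde U_v\}_{v \in V}$ of $B_\eps(|V|)$ and one for the induced cover $\{L_v\}_{v \in V}$ of $L_V$, linked by pullback under the inclusion $L_V \hookrightarrow \syza^{-1}(B_\eps(|V|))$. The first step is to identify the cohomology of each piece. For a vertex $v$, $\syza^{-1}(\tilde U_v)$ deformation retracts to the torus fiber over $v$, so $H^1(\syza^{-1}(\tilde U_v)) \cong N_Q|_v \cong H^0(V_v, N_Q) \cong H^1(L_v)$, with the restriction an isomorphism (using $N_v = 0$ together with \cref{eq:popCohomology}). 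For an edge $e$, $H^1(\syza^{-1}(\tilde U_e)) \cong H^0(V_e, N_Q)$ and $H^1(L_e) \cong H^0(V_e, N_Q/N_e)$, and the restriction is the edgewise quotient by $N_e$, surjective with kernel $\bigoplus_e H^0(|e|, N_e)$. Finally, $\bigoplus_e H^0(\syza^{-1}(\tilde U_e)) \to \bigoplus_e H^0(L_e)$ is the edgewise identity, an isomorphism.

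With these identifications, the two Mayer--Vietoris sequences assemble into a commutative ladder
\[
\begin{tikzcd}[column sep=small]
\bigoplus_e H^0(\syza^{-1}(\tilde U_e)) \arrow[r] \arrow[d, "\cong"] & H^1(\syza^{-1}(\tilde U_V)) \arrow[r] \arrow[d, "f"] & \bigoplus_v H^1(\syza^{-1}(\tilde U_v)) \arrow[r, "\psi"] \arrow[d, "\cong"] & \bigoplus_e H^1(\syza^{-1}(\tilde U_e)) \arrow[d, "q"] \\
\bigoplus_e H^0(L_e) \arrow[r] & H^1(L_V) \arrow[r] & \bigoplus_v H^1(L_v) \arrow[r, "\psi'"] & \bigoplus_e H^1(L_e).
\end{tikzcd}
\]
To prove that $f$ is surjective I would do a standard diagram chase. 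Given $\alpha \in H^1(L_V)$, let $\alpha_v \in \bigoplus_v H^1(L_v)$ be its image and lift it through the vertical isomorphism to $\tilde\alpha_v \in \bigoplus_v H^0(V_v, N_Q)$. Exactness of the bottom row gives $\psi'(\alpha_v) = 0$, and commutativity of the rightmost square then forces $\psi(\tilde\alpha_v)$ into $\ker q = \bigoplus_e H^0(|e|, N_e)$. Since $\psi(\tilde\alpha_v)$ also lies tautologically in the image of the restriction map $\bigoplus_v H^0(V_v, N_Q) \to \bigoplus_e H^0(V_e, N_Q)$, its class in $A(V, N_Q)$ vanishes, so $\psi(\tilde\alpha_v)$ represents an element of $\Def(V, Q) = \ker \phi$. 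The rigidity hypothesis $\Def(V, Q) = 0$ now forces $\psi(\tilde\alpha_v) = 0$, and by exactness of the top row $\tilde\alpha_v$ lifts to some $\hat\alpha \in H^1(\syza^{-1}(\tilde U_V))$. The remaining discrepancy $\alpha - f(\hat\alpha) \in H^1(L_V)$ vanishes in $\bigoplus_v H^1(L_v)$, so by exactness it lies in the image of $\bigoplus_e H^0(L_e) \to H^1(L_V)$; lifting along the leftmost vertical isomorphism gives a correction to $\hat\alpha$ mapping onto $\alpha$.

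The main obstacle I expect is the careful identification of the Mayer--Vietoris coboundary $\psi$ with the combinatorial restriction map $\bigoplus_v H^0(V_v, N_Q) \to \bigoplus_e H^0(V_e, N_Q)$ defining $A(V, N_Q)$ in \cref{eq:superabundance}, together with matching the vertical map $q$ with the edgewise quotient by $N_e$. Once these identifications are set up from the pair-of-pants local models, the rigidity hypothesis slots into the chase at the single step where it is needed, and surjectivity of $f$ becomes a formal consequence.
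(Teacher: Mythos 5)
Your proposal is correct and follows essentially the same strategy as the paper's proof: use the efficient cover from \cref{prop:efficientComplex}, identify the cohomology of the local pieces $L_\sigma$ and $\syza^{-1}(\tilde U_\sigma)$ via the pair-of-pants local models (\cref{eq:popCohomology}), and reduce the surjectivity question to the vanishing of $\Def(V,Q)$ through the combinatorial maps appearing in \cref{eq:superabundance}. The one substantive difference is that you unwind everything into a Mayer--Vietoris ladder and chase it directly, which is legitimate because for $\dim(V)=1$ the complex in \cref{prop:efficientComplex} is a three-term short exact sequence of cochain complexes and hence yields a long exact sequence. The paper instead runs the \v{C}ech-de Rham spectral sequence of the cover, isolating the pieces $i^{1,0}_\infty$ (your $H^1(V)\cong H^1(V)$ matching) and $i^{0,1}_\infty$ (where the rigidity is used), and uses a total-complex / snake-lemma manipulation to show $\coker(i^{0,1}_2)\cong\Def(V,Q)$. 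For curves your diagram chase is exactly the unrolled version of that spectral sequence computation, and arguably cleaner; the paper's formulation is chosen because it is the one that applies verbatim when $\dim V > 1$, where the cover has genuine higher intersections and a Mayer--Vietoris long exact sequence is no longer available, and the paper explicitly advertises \cref{lem:rigidGrading} as holding in arbitrary dimension. One small point worth flagging in your write-up: the identification of the Mayer--Vietoris connecting map $\psi$ with the (signed) restriction map $\bigoplus_v H^0(V_v,N_Q)\to\bigoplus_e H^0(V_e,N_Q)$, which you note as "the main obstacle," is indeed exactly the content of \cref{prop:efficientComplex} combined with the homotopy equivalences $\syza^{-1}(\tilde U_\sigma)\simeq T^n$ and the description of $H^1(L_\sigma)$; it is a routine check but should be recorded for the step where $\ker q$ is matched with $\bigoplus_e H^0(|e|,N_e)$.
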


\begin{proof}
  We compute $H^1(L_V)$ and  $H^1(\syza^{-1}(\tilde U_V))$ using the spectral sequence $E^{p, q}_i\Rightarrow H^\bullet(L_V)$, where
\begin{align*}
  E^{p, q}_0= \bigoplus_{\sigma \st \dim(\sigma)=p} C^q(L_\sigma)&& 
  F^{p, q}_0 = \bigoplus_{\sigma \st \dim(\sigma)=p} C^q(\syza^{-1} (\tilde U_\sigma))
\end{align*} 
  Surjectivity can be computed on graded components of the spectral sequence, so we need to show that the maps $i^{0,1}_\infty: F^{0,1}_\infty\to E^{0,1}_\infty$ and $i^{1,0}_\infty: F^{1, 0}_\infty\to E^{1,0}_\infty$ are surjective. 
  
  \noindent \textbf{Surjectivity of $i^{1,0}_\infty$:} As the incidences of our covers agree and each cover component is connected there is an isomorphism of complexes $(E^{k,0}_1, d^{k, 0}_1)\cong (F^{k, 0}_1, d^{k, 0}_1)$. For degree reasons, $E^{1, 0}_2\cong F^{1, 0}_\infty$ and $F^{1, 0}_2\cong F^{1, 0}_\infty$. It follows that $i^{1, 0}_\infty$ is an isomorphism identifying 
  \[H^1(V)\cong F^{1, 0}_\infty \cong E^{1,0}_\infty.\]

  \noindent\textbf{Surjectivity of $i^{0,1}_\infty$:} By degree reasons we have the diagram with exact rows
    \[
    \begin{tikzcd}
      0\arrow{r} &F^{0, 1}_\infty \arrow{r} \arrow[red]{d}{i^{0, 1}_\infty} & F^{0, 1}_2\arrow{r}{d^{0,1}_2} \arrow[blue]{d}{i^{0,1}_2}& F^{2,0}_2\arrow{d}{\sim}\\
      0 \arrow{r} &E^{0, 1}_\infty \arrow{r}& E^{0, 1}_2\arrow{r}{d^{0,1}_2}& E^{2,0}_2\\
    \end{tikzcd}\]
    The surjectivity of the red arrow will follow from the surjectivity of $i^{0,1}_2$ and the four-lemma.
    Further unwinding the spectral sequence yields the commutative diagram with exact rows
    \[\begin{tikzcd}   
    0\arrow{d} \arrow{r}{} & F^{1, 0}_2 \arrow[blue]{d}{i^{0,1}_2}\arrow{r}{} & F^{1, 0}_1 \arrow{d} \arrow{r}{d^{1,0}_1} & F^{11}_1  \arrow{d}
\\
     0 \arrow{r} & E^{1, 0}_2 \arrow{r} & E^{1, 0}_1 \arrow{r}{d^{1, 0}_1} & E^{11}_1 \\
    \end{tikzcd}\]
    After identifying $E^{p,q}_1=\bigoplus_{\sigma \st \dim(\sigma)=p} H^q( L_\sigma),  F^{p,q}_1\simeq \bigoplus_{\sigma \st \dim(\sigma)=p} H^q(\syza^{-1} (\tilde U_\sigma))$ and making the substitutions from the previous section,
    \begin{align*}
      H^1(L_v)=H^0(v,\underline i^*_vN_Q) && H^1(L_e)=H^0(N_e; i^*_vN_Q/N_e) \\ H^1(\syza^{-1}(\tilde U_v))=H^0(|V_v|,\underline i^*_vN_Q)&&H^1(\syza^{-1}(\tilde U_e))=H^0(||V_e|,\underline i^*_eN_Q)
    \end{align*}
    we obtain the following diagram with exact rows and columns
    \[\begin{tikzpicture}[xscale=1.2 ,yscale=1.5]

      \draw[dashed] (-2,-3) -- (11,1);
      \draw[dashed] (-2,-2) -- (11,2);
      \node[fill=white] (v11) at (8,2) {$ 0$};
      \node[fill=white] (v7) at (4,1) {$0$};
      \node[fill=white] (v12) at (8,1) {$\bigoplus_{v\in  V}H^0(|V_e|, N_e)$};
      \node[fill=white] (v1) at (-2,0) {$0$};
      \node[fill=white] (v3) at (1,0) {$F_2^{1, 0}$};
      \node[fill=white] (v8) at (4,0) {$ \bigoplus_{v\in V} H^0(|V_v|, \underline{i}_v^* N_Q)$};
      \node[fill=white] (v13) at (8,0) {$ \bigoplus_{e\in V} H^0(|V_v|, \underline{i}_e^* N_Q)$};
      \node[fill=white] (v2) at (-2,-1) {$ 0$};
      \node[fill=white] (v4) at (1,-1) {$ E_2^{1,0} $};
      \node[fill=white] (v9) at (4,-1) {$ \bigoplus_{v\in V} H^0(|V_v|, \underline{i}_v^* N_Q)$};
      \node[fill=white] (v14) at (8,-1) {$ \bigoplus_{e\in V} H^0(|V_v|, \underline{i}_e^* N_Q/N_e)$};
      \node[fill=white] (v5) at (1,-2) {$\text{coker}(i_2^{0,1}) $};
      \node[fill=white] (v10) at (4,-2) {$ 0$};
      \node[fill=white] (v6) at (1,-3) {$ 0$};
      \draw[->, blue]  (v3) edge node[midway,right]{$i^{0,1}_2$} (v4);
      \draw[->]  (v4) edge (v5);
      \draw[->]  (v5) edge (v6);
      \draw[->]  (v7) edge (v8);
      \draw[->]  (v8) edge (v9);
      \draw[->]  (v9) edge (v10);
      \draw[->]  (v11) edge (v12);
      \draw[->]  (v12) edge (v13);
      \draw[->]  (v13) edge (v14);
      \draw[->]  (v7) edge (v12);
      \draw[->]  (v1) edge (v3);
      \draw[->]  (v3) edge  (v8);
      \draw[->]  (v8) edge (v13);
      \draw[->]  (v2) edge (v4);
      \draw[->]  (v4) edge (v9);
      \draw[->]  (v9) edge (v14);
      \draw[->]  (v5) edge (v10);
\end{tikzpicture}\]
    We can almost apply the snake lemma to obtain the desired result, but the failure of right-exactness in the second row requires a small modification. By exactness of columns, we see that the total complex associated with the above diagram has cohomology vanishing in degrees corresponding to the two dashed diagonals. If we instead compute the cohomology by spectral sequence first taking the horizontal differential we obtain
    \[\begin{tikzpicture}[xscale=.8]

      \draw[dashed] (-2,-3) -- (11,1);
      \draw[dashed] (-2,-2) -- (11,2);
      \node[fill=white] (v11) at (8,2) {$ 0$};
      \node[fill=white] (v7) at (4,1) {$0$};
      \node[fill=white] (v12) at (8,1) {$\bigoplus_{v\in  V}H^0(|V_e|, N_e)$};
      \node[fill=white] (v1) at (-2,0) {$0$};
      \node[fill=white] (v3) at (1,0) {$0$};
      \node[fill=white] (v8) at (4,0) {$ 0$};
      \node[fill=white] (v13) at (8,0) {$ H^1(V, i^*N_Q)$};
      \node[fill=white] (v2) at (-2,-1) {$ 0$};
      \node[fill=white] (v4) at (1,-1) {$ 0 $};
      \node[fill=white] (v9) at (4,-1) {$ 0$};
      \node[fill=white] (v14) at (8,-1) {$*$};
      \node[fill=white] (v5) at (1,-2) {$\text{coker}(i_2^{0,1}) $};
      \node[fill=white] (v10) at (4,-2) {$ 0$};
      \node[fill=white] (v6) at (1,-3) {$ 0$};
      \draw[->]  (v3) edge  (v4);
      \draw[->]  (v4) edge (v5);
      \draw[->]  (v5) edge (v6);
      \draw[->]  (v7) edge (v8);
      \draw[->]  (v8) edge (v9);
      \draw[->]  (v9) edge (v10);
      \draw[->]  (v11) edge (v12);
      \draw[->]  (v12) edge (v13);
      \draw[->]  (v13) edge (v14);
\end{tikzpicture}\]
    The second and third page agree:
      \[\begin{tikzpicture}[xscale=.8]

        \draw[dashed] (-2,-3) -- (11,1);
        \draw[dashed] (-2,-2) -- (11,2);
        \node[fill=white] (v11) at (8,2) {$ 0$};
        \node[fill=white] (v7) at (4,1) {$0$};
        \node[fill=white] (v12) at (8,1) {$\text{Def}(V, Q)$};
        \node[fill=white] (v1) at (-2,0) {$0$};
        \node[fill=white] (v3) at (1,0) {$0$};
        \node[fill=white] (v8) at (4,0) {$ 0$};
        \node[fill=white] (v13) at (8,0) {$ 0$};
        \node[fill=white] (v2) at (-2,-1) {$ 0$};
        \node[fill=white] (v4) at (1,-1) {$ 0 $};
        \node[fill=white] (v9) at (4,-1) {$ 0$};
        \node[fill=white] (v14) at (8,-1) {$**$};
        \node[fill=white] (v5) at (1,-2) {$\text{coker}(i_2^{0,1}) $};
        \node[fill=white] (v10) at (4,-2) {$ 0$};
        \node[fill=white] (v6) at (1,-3) {$ 0$};
  \draw[red,->]  (v12) edge (v5);
  \end{tikzpicture}
        \]
        where the $(*), (**)$ represent some possibly non-zero group that we need not compute.
As the cohomology along the dashed lines must vanish, the differential on the 3rd page of the spectral sequence (drawn in red) provides an isomorphism $\Def(V, Q)\cong \coker(i^{0, 1}_2)$, finishing the proof of the lemma.
\end{proof}

\begin{proof}[Proof of \cref{thm:realizable}]
On homology, \cref{lem:rigidGrading} tells us that 
\[H_1(L_V)\to H_1(\syza^{-1}(U_V))\cong H_1(F)\oplus H_1(V)\]
is injective. It follows that the limiting grading group $\Grd_\infty(L, \{J_E\})$ constructed in \cref{lem:trivialRestrictiontoVertex} is trivial.
Unobstructedness of $L_V$ is then a consequence of \cref{thm:unobstructedness}.
\end{proof}
\section{Examples}
\label{sec:examples}
We provide some examples and non-examples of rigid tropical curves.

\begin{example}
  Let $V_1, V_2\subset Q$ be two tropical hypersurfaces in a tropical threefold, whose stabilizer under translations is trivial. Suppose that $V:=V_1\cap V_2$ admits a pair-of-pants decomposition. Then $V$ is not rigid (as the dimension of the deformation space is at least $\dim(Q)$).
  This shows that rigid tropical curves in $Q$ provide a new class of tropical curves whose realizability is not covered by previous constructions.
\end{example}
\begin{example}
  \label{ex:rigid1}
  We provide an example in dimension two.
  Consider the tropical curve in a tropical abelian variety drawn in \cref{fig:rigid1}, which has edges pointing in directions 
  \begin{align*}
  e_1: \langle 1, 0\rangle, && e_2: \langle 0,1 \rangle, && e_3:\langle -1, -1\rangle
  \end{align*} 
  living in the torus $\RR^n/(\langle 1, 3\rangle, \langle 3, 1 \rangle)$. 
  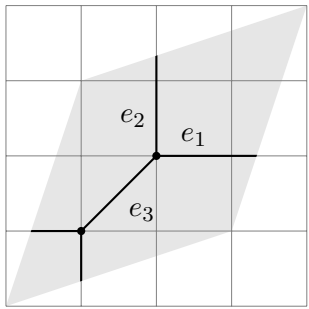
\begin{figure}
  \centering 
  \begin{tikzpicture}

    \fill[gray!20] (0,3) node (v1) {} -- (-1,0) -- (-4,-1) -- (-3,2) -- cycle;

    \draw [help lines, step=1cm] (-4,-1) grid (v1);

    \clip (0,3)  -- (-1,0) -- (-4,-1) -- (-3,2) -- cycle;
    \draw[thick] (-3,-1) -- (-3,0) -- (-4,0);

    \draw[thick] (-3,0) -- (-2,1) -- (-2,2.5) (-0.5,1) -- (-2,1);
    \node[above] at (-1.5,1) {$e_1$};
\node[left] at (-2,1.5) {$e_2$};
\node[below right] at (-2.5,0.5) {$e_3$};
\node[fill=black, circle, scale=.3] at (-3,0) {};
\node[fill=black, circle, scale=.3] at (-2,1) {};
\end{tikzpicture}   \caption{A rigid tropical curve in a tropical abelian surface.}
  \label{fig:rigid1}
  \end{figure}
  Let $c_1=e_1+e_3, c_2=e_2+e_3$ be the horizontal and vertical cycles.
  Give $H_1(|V|, N_Q)$ the basis $c_1^1, c_1^2, c_2^1, c_2^2$ (corresponding to the constant vectors $\langle 1, 0\rangle, \langle 0, 1\rangle \in N_Q$ along the cycles $c_1, c_2$). We give $\RR^E$ the basis $e_1, e_2, e_3$. In this basis, the map $\phi$ is given by the matrix
  \[
  \begin{pmatrix}
  1 & 0 & 1\\
  0 & 1 & 1\\\hline
  0 & 0 & 1\\
  0 & 1 & 1
  \end{pmatrix}: \RR^E\to H_1(|V|, \underline i_V^*N_Q)
  \]
  where the horizontal line divides the target basis into the $c_1$ and $c_2$ components. This matrix has rank 3 so $\phi$ is injective and $V$ is rigid.
  In fact, in dimension two, all examples have the same combinatorial type as this one, as \cref{eq:superabundanceInequality} states that $\Def(V, Q)\neq 0$ as soon as $V$ has genus at least 3.
\end{example}
\begin{example}
Unlike superabundance, rigidity is preserved under stabilization. 
  When we stabilize the curve from \cref{ex:rigid1} to the curve $V\times q \subset Q\times S^1$, the resulting tropical curve (pictured in \cref{fig:stabilization}) remains rigid, and therefore realizable. 
  \begin{figure}
    \centering
    \begin{tikzpicture}

\begin{scope}[shift={(1.5,-1.5)}]

\draw[gray] (0,3) node (v1) {} -- (-1,0) -- (-4,-1) -- (-3,2) -- cycle;
\end{scope}
\draw[gray] (-5.5,0.5) -- (-2.5,-2.5)   (-1.5,4.5) -- (1.5,1.5) (-4.5,3.5) -- (-1.5,0.5);
\begin{scope}[]

\fill[gray!20] (0,3) node (v1) {} -- (-1,0) -- (-4,-1) -- (-3,2) -- cycle;
\end{scope}
\draw (-2.5,-0.5) -- (-2.5,0.5) -- (-3.5,0.5);
\draw (-2.5,0.5) -- (-1.5,1.5) -- (-1.5,2.5) (-0.5,1.5) -- (-1.5,1.5);

\begin{scope}[shift={(-1.5,1.5)}]

\draw[gray] (0,3) node (v1) {} -- (-1,0) -- (-4,-1) -- (-3,2) -- cycle;
\end{scope}
\draw[gray](-2.5,1.5) -- (0.5,-1.5);

\node[fill=black, circle, scale=.3] at (-2.5,0.5) {};
\node[fill=black, circle, scale=.3] at (-1.5,1.5) {};
\end{tikzpicture}     \caption{Stabilization preserves rigidity of tropical curves.}
    \label{fig:stabilization}
  \end{figure}
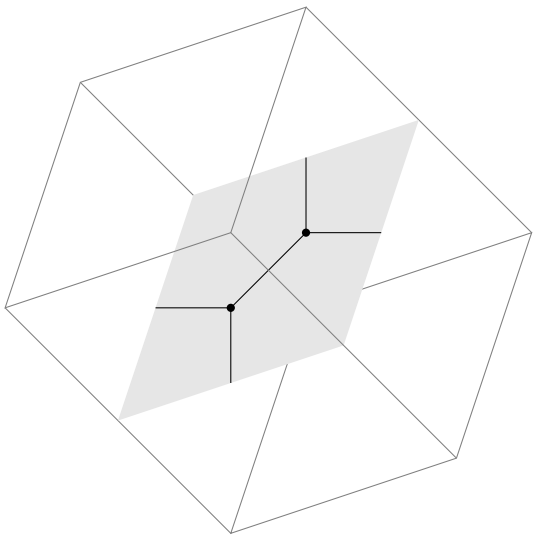
\end{example}
\begin{example}
We give an example of a rigid tropical curve in a tropical abelian threefold that does not arise from stabilization.
Consider the tropical curve drawn in \cref{fig:rigid2} with edges of primitive length in directions
\begin{align*}
  e_1 : \langle 1, 0, 0 \rangle, && e_2':\langle 0, 1 , 0 \rangle,  && e_3:\langle 0, 0, -1 \rangle, \\
  e_1' : \langle -1, 0, 0 \rangle, && e_2 :\langle 0, -1, 0 \rangle, && e_3' : \langle 0, 0, 1 \rangle, \\
  e_{12}: \langle 1, 1, 0 \rangle, && e_{13} : \langle 1, 0, 1 \rangle, && e_{23} : \langle 0, 1, 1\rangle.
\end{align*}

\begin{figure}
  \centering
  \begin{tikzpicture}

\begin{scope}[shift={(1,-1.5)}, dotted]

\draw (2.5,-2.5) -- (-1.5,1.5);

\fill[red!20] (-1,1.5) -- (0,0.5) -- (1,1) -- (0,2) -- cycle;
\fill[green!20] (-1,1.5) -- (0,0.5) -- (0,-1) -- (-1,0) -- cycle;
\fill[blue!20] (0,0.5) -- (0,-1) -- (1,-0.5) -- (1,1) -- cycle;

\draw (-1.5,0.5) -- (2.5,2.5);
\draw (2.5,-2.5) -- (2.5,2.5);
\draw (2.5,-2.5) -- (-1.5,-4.5) -- (-1.5,0.5) -- (-5.5,4.5) -- (-1.5,6.5) -- (2.5,2.5);
\draw (-1.5,6.5) -- (-1.5,1.5) -- (-5.5,-0.5) -- (-1.5,-4.5) (-5.5,-0.5) -- (-5.5,4.5);
\end{scope}

\draw (0,0) -- node[midway, above]{$e_1$}(1,0.5) --node[midway, above]{$e_2$}  (2,-0.5) --node[midway, right]{$e_3$} (2,-2) --node[midway, below]{$e'_1$} (1,-2.5) -- node[midway, below]{$e'_2$}(0,-1.5) -- node[midway, left]{$e'_3$}(0,0);
\draw[->] (0,0) -- (-1,1)node[above]{$e_{13}$};
\draw[->>] (0,-1.5) -- (-1,-2)node[above]{$e_{23}$};
\draw[->>>] (1,-2.5) -- (1,-3.5)node[right]{$e_{12}$};
\draw[-<] (2,-2) -- (3,-3);
\draw[-<<] (2,-0.5) -- (3,0);
\draw[-<<<] (1,0.5) -- (1,1.5);
\node at (-2.5,-4) {$\langle 3, 3, 1\rangle $};

\node at (-4.5,0.5) {$\langle 3, 1, 3\rangle$};

\node at (1.5,-5) {$\langle 1, 3, 3\rangle $};
\begin{scope}[help lines, shift={(2,-1)}]
\node at (-4.5,-6) {$\langle 0, 1, 0 \rangle $};
\node at (-6,-2.5) {$\langle 0, 0, 1 \rangle $};
\node at (-7.5,-5) {$\langle 1, 0, 0 \rangle $};
\draw[->] (-6,-4.5) -- (-7,-5);
\draw[->] (-6,-4.5) -- (-5,-5.5);
\draw[->] (-6,-4.5) -- (-6,-3);
\end{scope}
\node[fill, circle, scale=.3] at (0,0) {};
\node[fill, circle, scale=.3] at (0,-1.5) {};
\node[fill, circle, scale=.3] at (1,-2.5) {};
\node[fill, circle, scale=.3] at (2,-2) {};
\node[fill, circle, scale=.3] at (2,-0.5) {};
\node[fill, circle, scale=.3] at (1,0.5) {};
\end{tikzpicture}   \caption{A tropical curve in a tropical abelian torus. The RGB cube is the unit cube drawn for reference. The dotted lines denote the boundary of the fundamental domain of the abelian torus. }
  \label{fig:rigid2}
  \end{figure}
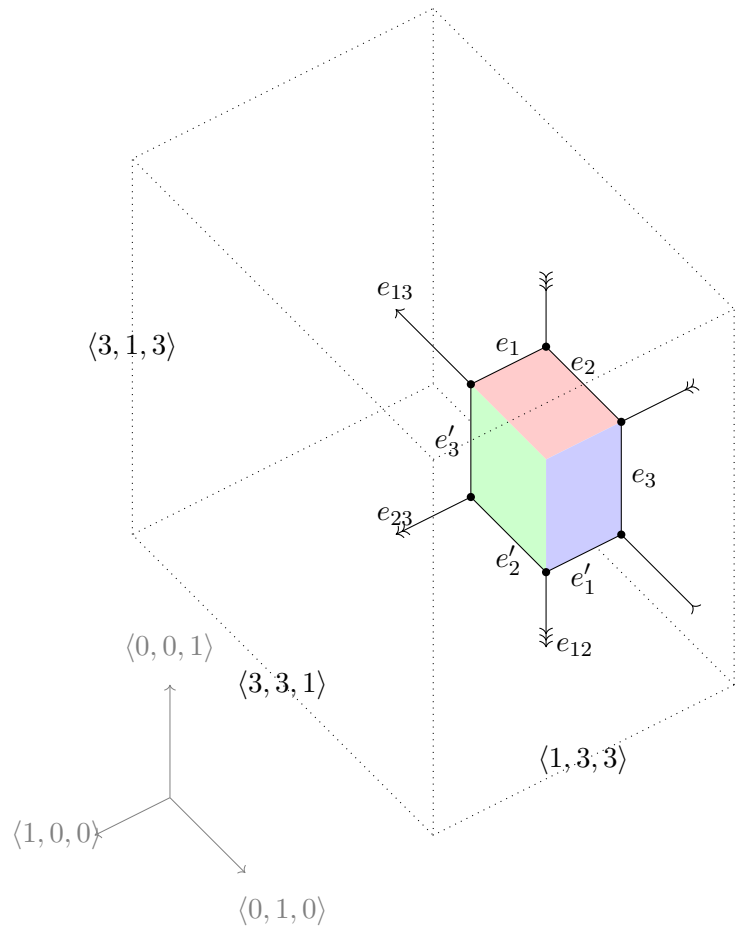

We give $\RR^E$ the basis $e_1, e_2, e_3, e_1', e_2', e_3', e_{12},e_{13},e_{23}$
each of integral length one. This closes up to a closed tropical curve when drawn within a fundamental domain with edges in directions $\langle 3, 3, 1 \rangle, \langle 3, 1, 3 \rangle, \langle 1, 3, 3\rangle$.  The following four cycles provide a basis for $H^1(|V|)$
\begin{align*} c_{12}=e_{12}+e_1'+e_3'+e_2', && c_{13}=e_{13}+e_1'+e_2+e_3', && c_{23}= e_{23}+e_2'+e_1+e_3', \\
  && a=e_1+e_2'+e_3+e_1'+e_2+ e_3'
\end{align*}

A basis for $H^1(|V|, \underline i_V^*N_Q)$ is then given by $c_{12}^1, c_{12}^2, c_{12}^3, c_{13}^1, c_{13}^2, c_{13}^3, c_{23}^1, c_{23}^2, c_{23}^3, a^1, a^2, a^3$, where the superscripts denote the standard basis directions of $N_Q$. In this basis, the map $\phi: \RR^E\to H^1(|V|, \underline i_V^*N_Q)$ is given by the matrix
\[\begin{pmatrix}
0&0&0&1&0&0&1&0&0\\
0&0&0&0&1&0&1&0&0\\
0&0&1&0&0&0&0&0&0\\\hline
0&0&0&1&0&0&0&1&0\\
0&1&0&0&0&0&0&0&0\\
0&0&0&0&0&1&0&1&0\\\hline
1&0&0&0&0&0&0&0&0\\
0&0&0&0&1&0&0&0&1\\
0&0&0&0&0&1&0&0&1\\\hline
1&0&0&1&0&0&0&0&0\\
0&1&0&0&1&0&0&0&0\\
0&0&1&0&0&1&0&0&0
\end{pmatrix}\]
where the horizontal lines again divide the target into the $c_{12},c_{13}, c_{23}$ and $a$-cycle components. The matrix has rank 9 so $L_V$ is unobstructed.
\end{example}

\printbibliography
\Addresses

\end{document}